\newcommand{\MR}[1]{}}
\renewcommand{\MR}[1]{}}
\renewcommand{\thesubsection}{\thesection(\@roman\c@subsection)}
\newenvironment{NB}{
\color{red}{\bf NB}. \footnotesize
}{}
\newenvironment{NB2}{
\color{blue}{\bf NB}. \footnotesize
}{}
\newtheorem{Theorem}[equation]{Theorem}
\newtheorem{Lemma}[equation]{Lemma}
\theoremstyle{definition}
\newtheorem{Conjecture}[equation]{Conjecture}
\theoremstyle{remark}
\numberwithin{equation}{section}
\newcommand{\thmref}[1]{Theorem~\ref{#1}}
\newcommand{\subsecref}[1]{\S\ref{#1}}
\newcommand{\lsp}[2]{{\mskip-.3mu}{}^{#1}\mskip-1mu{#2}}
\newcommand{\defeq}{\overset{\operatorname{\scriptstyle def.}}{=}}
\newcommand{\C}{{\mathbb C}}
\newcommand{\Z}{{\mathbb Z}}
\newcommand{\Q}{{\mathbb Q}}
\newcommand{\R}{{\mathbb R}}
\newcommand{\GL}{\operatorname{GL}}
\newcommand{\gl}{\operatorname{\mathfrak{gl}}}
\newcommand{\g}{{\mathfrak g}}
\newcommand{\Hom}{\operatorname{Hom}}
\newcommand{\Ext}{\operatorname{Ext}}
\newcommand{\Ima}{\operatorname{Im}}
\newcommand{\id}{\operatorname{id}}
\newcommand{\vin}[1]{\operatorname{i}(#1)} 
\newcommand{\vout}[1]{\operatorname{o}(#1)} 
\newcommand{\bc}{\mathbf c}
\newcommand{\gr}{\operatorname{gr}} 
\newcommand{\Uq}{{\mathbf U}_q} 
\newcommand{\Un}{{\mathbf U}_q^-}
\newcommand{\Uni}{{}_\A\!{\mathbf U}_q^-}
\newcommand{\HomE}{\mathbf E}
\newcommand{\A}{\mathbf A}
\newcommand{\B}{\mathscr B}
\newcommand{\LL}{{\mathscr L}} 
\newcommand{\Irr}{\operatorname{Irr}}
\newcommand{\Res}{\operatorname{Res}}
\newcommand{\wt}{\operatorname{wt}}
\newcommand{\up}{{\operatorname{up}}}
\newcommand{\Cons}{C}
\newcommand{\scr}{\mathscr}
\newcommand{\te}{\tilde e}
\newcommand{\tf}{\tilde f}
\newcommand{\subjclass}[2][2000]{}
\renewenvironment{keywords}{}{}
\title[Cluster algebras and perverse sheaves]
{Cluster algebras and singular supports of perverse sheaves
}
\author[Hiraku Nakajima]{Hiraku Nakajima\thanks{Supported by the Grant-in-aid
for Scientific Research (No.23340005), JSPS, Japan.
}}
\begin{document}
}
{
\begin{document}
\title[Cluster algebras and perverse sheaves]
{Cluster algebras and singular supports of perverse sheaves
}
\author{Hiraku Nakajima}
\address{Research Institute for Mathematical Sciences,
Kyoto University, Kyoto 606-8502,
Japan}
\email{nakajima@kurims.kyoto-u.ac.jp}
}

\begin{abstract}
  We propose an approach to Geiss-Leclerc-Schroer's conjecture on the
  cluster algebra structure on the coordinate ring of a unipotent
  subgroup and the dual canonical base. It is based on singular
  supports of perverse sheaves on the space of representations of a
  quiver, which give the canonical base.
\end{abstract}

\begin{classification}
Primary 13F60; Secondary 17B37, 35A27.
\end{classification}

\subjclass[2000]{Primary 13F60; Secondary 17B37, 35A27}

\begin{keywords}
Cluster algebras, perverse sheaves, singular supports
\end{keywords}

\makeatother

\maketitle

\section*{Introduction}

In \cite{cluster}, the author found an approach to the theory of
cluster algebras, based on perverse sheaves on graded quiver
varieties. This approach gave a link between two categorical
frameworks for cluster algebras, the additive one via the cluster
category by Buan et al.\ \cite{BMRRT} and the multiplicative one via
the category of representations of a quantum affine algebra
by Hernandez-Leclerc \cite{HerLec}.
See also the survey article \cite{Leclerc}.

In \cite[\S1.5]{cluster}, the author asked four problems in the to-do
list, to which he thought that the same approach can be
applied. Except the problem (3), they have been subsequently solved in
works by Qin \cite{Qin,Qin2} and Kimura-Qin \cite{KQ}.

Let us explain other related problems, for which the approach does not
work.
We need a {\it new\/} idea to attack these.

A problem, discussed in this paper, is a natural generalization of the
problem (4) in the to-do list. The author asked to find a relation
between the work of Geiss-Leclerc-Schr\"oer \cite{GLSc} and
\cite{cluster} there. But the work \cite{GLSc} dealt with more general
cases than those corresponding to \cite{cluster}.
A main conjecture says that every cluster monomial in the coordinate
ring of a unipotent subgroup is a Lusztig's dual canonical base element.
Later the theory is generalized to the $q$-analog, where the canonical
base naturally lives \cite{GLSq}.
Therefore it is desirable to find a relation between the cluster
algebra structure and perverse sheaves on the space of quiver
representations, which give the canonical base of the quantum
enveloping algebra.

Another problem is not discussed here, but possibly related to the
current one via \cite{HerLec2}.
In \cite{HerLec}, Hernandez-Leclerc conjectured that the Grothendieck
ring of representations of the quantum affine algebra has a structure
of a cluster algebra so that every cluster monomial is a class of an
irreducible representation.
Those irreducible representations are simple perverse sheaves on
graded quiver varieties.
But what was proved in \cite{cluster} is the special case of the
conjecture only for a certain subalgebra of the Grothendieck ring.
The first part of the conjecture has been subsequently proved in
\cite{HerLec2}. But the latter part, every cluster monomial is an
irreducible representation, is still open.

In this paper, we propose an approach to the first problem. It is not
fully developed yet. We will give a few results, which indicate that
we are going in the right direction. The new idea is to use the singular
support of a perverse sheaf, which is a lagrangian subvariety in the
cotangent bundle. The latter is related to the representation theory
of the preprojective algebra, which underlies the work \cite{GLSc}.

The paper is organized as follows. In the first section, we briefly
recall the canonical and semicanonical bases. 
In the second section, we review works of Geiss-Leclerc-Schr\"oer
\cite{GLSc,GLSq}, where a quantum cluster algebra structure on a
quantum unipotent subgroup is introduced. A subcategory, denoted by
$\mathcal C_w$, of the category of nilpotent representations of the
preprojective algebra plays a crucial role.
In the third section, we study how the singular support behaves under
the restriction functor for perverse sheaves. The restriction functor
gives a multiplication in the dual of the quantum enveloping algebra.
Our main result is the estimate in \thmref{thm:est}.
In the final section, we give two conjectures, which give links
between the theory of \cite{GLSc,GLSq} and perverse sheaves via
singular support.

\subsection*{Acknowledgments}

The main conjecture (Conj.~\ref{conj:2}) was found in the spring of
2011, and has been mentioned to various people since then.
The author thanks Masaki Kashiwara and Yoshihisa Saito for discussion
on the conjecture.
He also thanks the referee who points out a relation between the main
conjecture and a conjecture in \cite[\S1.5]{GLS1}.

\section{Preliminaries}

\subsection{Quantum enveloping algebra}

Let $\g$ be a symmetrizable Kac-Moody Lie algebra. 
We assume $\g$ is symmetric, as we use an approach to $\g$ via the
Ringel-Hall algebra for a quiver.
Let $I$ be the index set of simple roots, $P$ be the weight lattice,
and $P^*$ be its dual. Let $\alpha_i$ denote the $i^{\mathrm{th}}$
simple root.

Let $\Uq$ be the corresponding quantum enveloping algebra, that
is a $\Q(q)$-algebra generated by $e_i$, $f_i$ ($i\in I$), $q^h$
($h\in P^*$) with certain relations. Let $\Un$ be the subalgebra
generated by $f_i$.
We set $\wt(e_i) = \alpha_i$, $\wt(f_i) = - \alpha_i$, $\wt(q^h) = 0$.
Then $\Uq$ is graded by $P$.

The quantum enveloping algebra $\Uq$ is a Hopf algebra. We have a
coproduct $\Delta\colon\Uq\to \Uq\otimes\Uq$. It does not preserve
$\Un$, but Lusztig introduced its modification
$r\colon \Un\to\Un\otimes\Un$ such that
$r(f_i) = f_i\otimes 1 + 1\otimes f_i$ and $r$ is an algebra homomorphism
with respect to the multiplication on $\Un\otimes\Un$ given by
\begin{equation}\label{eq:tm}
  (x_1\otimes y_1)\cdot (x_2\otimes y_2) = q^{-(\wt x_2,\wt y_1)}
  x_1x_2\otimes y_1 y_2,
\end{equation}
where $x_i$, $y_i$ are homogeneous elements. We call $r$ the {\it
  twisted coproduct}.

Let $\A = \mathbb Z[q,q^{-1}]$. Then $\Un$ has an $\A$-subalgebra
$\Uni$ generated by $q$-divided powers $f_i^{(n)} = f_i^n/[n] !$,
where $[n] = (q^n - q^{-n})/(q-q^{-1})$ and $[n]! = [n][n-1]\cdots
[1]$. Then $r$ induces $\Uni\to \Uni\otimes\Uni$, which is denoted
also by $r$.

\subsection{Perverse sheaves on the space of quiver representations
  and the canonical base}

Consider the Dynkin diagram $\mathcal G = (I,E)$ for the Kac-Moody Lie
algebra $\g$, where $I$ is the set of vertices, and $E$ the set of
edges. Note that $\mathcal G$ does not have an edge loop, i.e., an
edge connecting a vertex to itself.

Let $H$ be the set of pairs consisting of an edge together with its
orientation. So we have $\# H = 2\# E$.
For $h\in H$, we denote by $\vin{h}$ (resp.\ $\vout{h}$) the incoming
(resp.\ outgoing) vertex of $h$.  For $h\in H$ we denote by $\overline
h$ the same edge as $h$ with the reverse orientation.
Choose and fix an orientation $\Omega$ of the graph,
i.e., a subset $\Omega\subset H$ such that
$\overline\Omega\cup\Omega = H$, $\Omega\cap\overline\Omega = \emptyset$.
The pair $(I,\Omega)$ is called a {\it quiver}.

Let $V = (V_i)_{i\in I}$ be a finite dimensional $I$-graded vector
space over $\C$. The dimension of $V$ is a vector
\[
  \dim V = (\dim V_i)_{i\in I}\in \mathbb Z_{\ge 0}^I.
\]

We define a vector space by
\begin{equation*}
  \HomE_V \defeq \bigoplus_{h\in \Omega} \Hom(V_{\vout{h}}, V_{\vin{h}}).
\end{equation*}

Let $G_V$ be an algebraic group defined by
\begin{equation*}
   G_V \defeq \prod_i \GL(V_i).
\end{equation*}
Its Lie algebra is the direct sum $\bigoplus_i \gl(V_i)$.
The group $G_V$ acts on $\HomE_V$ by
\begin{equation*}\label{eq:Kaction}
  B = (B_h)_{h\in\Omega} \mapsto g\cdot B = 
  (g_{\vin{h}} B_h g_{\vout{h}}^{-1})_{h\in\Omega}.
\end{equation*}

The space $\HomE_V$ parametrizes isomorphism classes of
representations of the quiver with the dimension vector $\dim V$
together with a linear base of the underlying vector space compatible
with the $I$-grading. The action of the group $G_V$ is induced by the
change of bases.

In \cite{Lu-can2,Lu-book} Lusztig introduced a full subcategory
$\mathcal P_V$ of the abelian category of perverse sheaves on
$\HomE_V$. Its definition is not recalled here. See
\cite[\S2]{Lu-can2} or \cite[Chap.~9]{Lu-book}. Its objects are
$G_V$-equivariant.

Let $\scr D(\HomE_V)$ be the bounded derived category of complexes of
sheaves of $\C$-vector spaces over $\HomE_V$.
Let $\mathcal Q_V$ be the full subcategory of $\scr D(\HomE_V)$
consisting of complexes that are isomorphic to finite direct sums of
complexes of the form $L[d]$ for $L\in\mathcal P_V$, $d\in\Z$.

Let $\scr K(\mathcal Q_V)$ be the Grothendieck group of $\mathcal
Q_V$, that is the abelian group with generators $(L)$ for isomorphism
classes of objects $L$ of $\mathcal Q_V$ with relations $(L) + (L') =
(L'')$ whenever $L''$ is isomorphic to $L\oplus L'$. It is a module
over $\A = \mathbb Z[q,q^{-1}]$, where $q$ corresponds to the
shift of complexes in $\mathcal Q_V$. Then $\scr K(\mathcal Q_V)$ is a
free $\A$-module with a basis $(L)$ where $L$ runs over $\mathcal
P_V$.

Let us consider the direct sum $\bigoplus_V \scr K(\mathcal Q_V)$ over
all isomorphism classes of finite dimensional $I$-graded vector
spaces.
Let $S_i$ be the $I$-graded vector space with $\dim S_i = 1$, $\dim
S_j = 0$ for $j\neq i$. Then the corresponding space $\HomE_{S_i}$ is
a single point. Let $1_i$ be the constant sheaf on $\HomE_{S_i}$,
viewed as an element in $\scr K(\mathcal Q_{S_i})$.
Then Lusztig defined a multiplication and a twisted coproduct on
$\bigoplus_V \scr K(\mathcal Q_V)$ such that the $\A$-algebra
homomorphism
\begin{equation*}
  \Phi\colon \Uni \to \bigoplus_V \scr K(\mathcal Q_V)
\end{equation*}
with $\Phi(f_i) = 1_i$ is an isomorphism respecting twisted coproducts
\cite{Lu-can2,Lu-book}. The construction was motivated by an earlier
work by Ringel \cite{Ringel}.

We here recall the definition of the twisted coproduct on $\bigoplus_V
\scr K(\mathcal Q_V)$. For the definition of the multiplication, see
the original papers.

Let $W$ be an $I$-graded subspace of $V$. Let $T = V/W$.
Let $E(W)$ be the subspace of $\HomE_V$ consisting of $B\in\HomE_V$
which preserves $W$.
We consider the diagram
\begin{equation}
  \HomE_T\times \HomE_W \xleftarrow{\kappa} E(W) \xrightarrow{\iota}
  \HomE_V,
\end{equation}
where $\iota$ is the inclusion and $\kappa$ is the map given by
assigning to $B\in E(W)$, its restriction to $W$ and the induced map
on $T$.

Consider the functor
\begin{equation*}
  \Res \defeq \kappa_!\iota^* (\bullet) [d]
  \colon \scr D(\HomE_W) \to \scr D(\HomE_T\times\HomE_W),
\end{equation*}
where $d$ is a certain explicit integer, whose definition is omitted
here as it is not relevant for the discussion in this paper.

It is known that $\Res$ sends $\mathcal Q_W$ to the subcategory
$\mathcal Q_{T,W}$ of $\scr D(\HomE_T\times\HomE_W)$ consisting of
complexes that are isomorphic to finite direct
sums of complexes of the form $(L\boxtimes L')[d]$ for $L\in\mathcal P_T$, 
$L'\in\mathcal P_W$, $d\in\Z$.
Therefore we have an induced $\A$-linear homomorphism
\begin{equation*}
  \Res\colon \scr K(\mathcal Q_V)\to
  \scr K(\mathcal Q_{T,W}) \cong \scr K(\mathcal Q_T)\otimes_{\A}
  \scr K(\mathcal Q_W).
\end{equation*}

We take direct sum over $V$, $T$, $W$ to get a homomorphism of an
algebra with respect to the twisted multiplication \eqref{eq:tm}. It
corresponds to $r$ on $\Uni$ under $\Phi$.

Recall that $\scr K(\mathcal Q_V)$ has an $\A$-basis $(L)$, where
$L$ runs over $\mathcal P_V$. Taking direct sum over $V$, and pulling
back by the isomorphism $\Phi$, we get an $\A$-basis of $\Uni$. This is 
Lusztig's canonical basis. Let us denote it by $\B(\infty)$.

Kashiwara gave an algebraic approach to $\B(\infty)$. See
\cite{Ka-crystal-book} and references therein for detail.
He first introduced an $\A_0$-form $\LL(\infty)$ of $\Un$, where $\A_0
= \{ f\in\Q(q) \mid \text{$f$ is regular at $q=0$}\}$. Then he also
defined a basis, called the {\it crystal base\/} of
$\LL(\infty)/q\LL(\infty)$.
Then he introduced the {\it global crystal base\/} of $\Un$, which
descends to the crystal base of $\LL(\infty)/q\LL(\infty)$.
It turns out that the global crystal base and the canonical base are
the same. See \cite{Grojnowski-Lusztig}.

In this paper, we do not distinguish the crystal base and the
global base, that is the canonical base. We denote both by $\B(\infty)$.

When we want to emphasize that a canonical base element
$b\in\B(\infty)$ is a perverse sheaf, we denote it by $L_b$.

\subsection{Dual canonical base}\label{subsec:dual}

There exists a unique symmetric bilinear form $(\ ,\ )$ on $\Un$
satisfying
\begin{gather*}
  (1, 1) = 1, \qquad (f_i, f_j) = \delta_{ij},
\\
  (r(x),y\otimes z) = (x, yz) \quad\text{for $x$, $y$, $z\in \Un$}.
\end{gather*}
Our normalization is different from \cite[Ch.~1]{Lu-book} and follows
Kashiwara's as in \cite{Kimura,GLSq}.

Under $(\ ,\ )$, we can identify the graded dual algebra of $\Un$, an
algebra with the multiplication given by $r$, with $\Un$ itself.

Let $\B^\up(\infty)$ denote the dual base of $\B(\infty)$ with respect
to $(\ ,\ )$. It is called the {\it dual canonical base\/} of $\Un$.

For $b_1,b_2, b_3\in\B(\infty)$, let us define $r^{b_1,b_2}_{b_3}\in
\A$ by
\begin{equation*}
  r(b_3) = \sum_{b_1,b_2\in \B(\infty)} r^{b_1,b_2}_{b_3} b_1\otimes b_2.
\end{equation*}
Let $b_1^\up$, $b_2^\up$, $b_3^\up\in\B^\up(\infty)$ be the dual
elements corresponding to $b_1$, $b_2$, $b_3$ respectively. Then we
have
\begin{equation*}
  b_1^\up b_2^\up = \sum_{b_3^\up\in\B^\up(\infty)} r^{b_1,b_2}_{b_3} b_3^\up.
\end{equation*}
Thus the structure constant is given by $r^{b_1,b_2}_{b_3}$.

\subsection{Lusztig's lagrangian subvarieties and crystal}\label{subsec:lag}
Let us introduce Lusztig's lagrangian subvariety in the cotangent
space of the space of quiver representations.

The dual space to $\HomE_V$ is
\begin{equation*}
  \HomE_V^* = \bigoplus_{h\in \overline{\Omega}} \Hom(V_{\vout{h}}, V_{\vin{h}}).
\end{equation*}
The group $G_V$ acts on $\HomE_V^*$ in the same way as on $\HomE_V$.

The $G_V$-action preserves the natural pairing between $\HomE_V$ and
$\HomE_V^*$. Considering $\HomE_V\oplus \HomE_V^*$ as a symplectic
manifold, we have the moment map $\mu = (\mu_i) \colon \HomE_V\oplus \HomE_V^*
\to \bigoplus_i\gl(V_i)$ given by 
\begin{equation*}
  \mu_i(B) = \sum_{\vin{h} = i} \varepsilon(h) B_h B_{\overline{h}},
\end{equation*}
where $B$ has components $B_h$ for both $h\in\Omega$ and
$\overline{\Omega}$, and $\varepsilon(h) = 1$ if $h\in\Omega$ and $-1$
otherwise.

Lusztig's lagrangian $\Lambda_V$ \cite{Lu-crystal,Lu-can2} is
defined as
\begin{equation}
  \Lambda_V \defeq \left\{ B\in \HomE_V\oplus \HomE_V^*\,\middle|\,
    \mu(B) = 0, \text{$B$ is nilpotent}\right\}.
\end{equation}

This space parametrizes isomorphism classes of nilpotent
representation of the preprojective algebra associated with the quiver
$(I,\Omega)$, together with a linear base of the underlying vector
space compatible with the $I$-grading. The action of the group $G_V$
is induced by the change of bases.
The preprojective algebra is denoted by $\Lambda$ in this paper.

This is a lagrangian subvariety in $\HomE_V\oplus \HomE_V^*$. (It was
proved that $\Lambda_V$ is half-dimensional in $\HomE_V\oplus
\HomE_V^*$ in \cite[12.3]{Lu-can2}. And the same argument shows that
it is also a lagrangian. Otherwise use \cite[Th.~5.8]{Na-quiver} and
take the limit $W\to\infty$.)

Let $\Irr\Lambda_V$ be the set of irreducible components of
$\Lambda_V$. Lusztig defined a structure of an abstract crystal (see
\cite[\S3]{KS} for the definition) on $\Irr\Lambda_V$ in
\cite{Lu-crystal}, and Kashiwara-Saito proved that it is isomorphic to
the underlying crystal of the canonical base $\B(\infty)$ of
$\Un$ \cite{KS}.

We denote by $\Lambda_b$ the irreducible component of $\Lambda_V$
corresponding to a canonical base element $b\in \B(\infty)$.

\subsection{Dual semicanonical base}\label{subsec:dualsemi}

Let $\Cons(\Lambda_V)$ be the $\Q$-vector space of $\Q$-valued
constructible functions over $\Lambda_V$, which is invariant under the
$G_V$-action. Lusztig defined an operator
$\Cons(\Lambda_T)\times\Cons(\Lambda_W)\to \Cons(\Lambda_V)$ for $V =
T\oplus W$, under which the direct sum $\bigoplus_V \Cons(\Lambda_V)$
is an associative algebra (see \cite[\S12]{Lu-can2}).

If $V = S_i$, then $\Lambda_{S_i}$ is a single point. Let $1_i$ be the
constant function on $\Lambda_{S_i}$ with the value $1$. Let $\Cons_0$
be the subalgebra of $\bigoplus_V \Cons(\Lambda_V)$ generated by the
elements $1_i$ ($i\in I$), and let $\Cons_0(\Lambda_V) = \Cons_0\cap
\Cons(\Lambda_V)$.
Then Lusztig (see \cite[Th.~12.13]{Lu-can2}) proved that $\Cons_0$ is
isomorphic to the universal enveloping algebra $\mathbf U(\mathfrak
n)$ of the lower triangular subalgebra $\mathfrak n$ of $\g$ by $f_i
\mapsto 1_i$.

Note that we have an embedding $\Lambda_T\times \Lambda_W\to
\Lambda_V$ given by the direct sum, where $V = T\oplus W$ as above.
Then the restriction defines an operator $\Cons(\Lambda_V)\to
\Cons(\Lambda_T)\otimes \Cons(\Lambda_W)$. Geiss-Leclerc-Schr\"oer
proved that it sends $\Cons_0(\Lambda_V)$ to
$\Cons_0(\Lambda_T)\otimes\Cons_0(\Lambda_W)$, and gives the natural
cocommutative coproduct on $\mathbf U(\mathfrak n)$ under the
isomorphism $\Cons_0\cong\mathbf U(\mathfrak n)$ (see \cite[\S4]{GLS1}).

Let $Y$ be an irreducible component of $\Lambda_V$. Then consider the
functional $\rho_Y\colon \Cons(\Lambda_V)\to \Q$ given by taking the
value on a dense open subset of $Y$. Then $\{ \rho_Y \mid
Y\in\Irr\Lambda_V\}$ gives a base of $\Cons_0(\Lambda_V)$. This
follows from \cite[\S3]{Lu-affine} together with the result of
Kashiwara-Saito mentioned above. Under the isomorphism $\mathbf
U(\mathfrak n)\cong \Cons_0$, the base $\rho_Y$ is called the {\it
  dual semicanonical base\/} of $\mathbf U(\mathfrak
n)^*_{\operatorname{gr}}$, where $\mathbf U(\mathfrak
n)^*_{\operatorname{gr}}$ denote the graded dual of $\mathbf
U(\mathfrak n)$.

We have a natural bijection $b^\up\mapsto \rho_{\Lambda_b}$ between
the dual canonical base and the dual semicanonical base.
However $\rho_{\Lambda_b}$ is different from the specialization of
$b^\up$ at $q=1$ in general. See \cite[\S1.5]{GLS1} for a counter-example.

\section{Cluster algebras and quantum unipotent subgroups}

We fix a Weyl group element $w$ throughout this section. Let
$\Delta_w^+ = \Delta^+\cap w(-\Delta^+)$, where $\Delta^+$ is the set
of positive roots.
Then $\mathfrak n(w) = \bigoplus_{\alpha\in\Delta_w^+} \g_{-\alpha}$
is a Lie subalgebra of $\g$, where $\g_{-\alpha}$ is the root subspace
corresponding to the root $-\alpha$.

\subsection{Quantum unipotent subgroup}

Let us briefly recall the $q$-analog of the universal enveloping
algebra $\mathbf U(\mathfrak n(w))$ of $\mathfrak n(w)$, denoted by
$\Un(w)$. See \cite[Ch.~40]{Lu-book}, \cite[\S4]{Kimura} and
\cite{GLSq} for more detail. (It is denoted by $A_q(\mathfrak n(w))$
in \cite{GLSq}.)

Let $T_i$ be the braid group operator corresponding to $i\in I$, where
$T_i = T_{i,1}''$ in the notation in \cite{Lu-book}.
Choose a reduced expression $w = s_{i_1}s_{i_2}\cdots
s_{i_{\ell}}$. Then it gives $\beta_p = s_{i_1} s_{i_2}\cdots
s_{i_{p-1}}(\alpha_{i_p})$, and we have $\Delta_w^+ = \{ \beta_p\}_{1\le
  p\le \ell}$. We define a {\it root vector}
\begin{equation*}
  T_{i_1} T_{i_2}\cdots T_{i_{p-1}}(f_{i_p}).
\end{equation*}
Let $\bc = (c_1,\dots,c_\ell)\in\Z_{\ge 0}^\ell$. We multiply
$q$-divided powers of root vectors in the order given by $\beta_1$,
\dots, $\beta_\ell$:
\begin{equation*}
  L(\bc) = f_{i_1}^{(c_1)} T_{i_1}(f_{i_2}^{(c_2)})\cdots
  (T_{i_1}\cdots T_{i_{\ell-1}})(f_{i_\ell}^{(c_\ell)}).
\end{equation*}
Then the $\Q(q)$-subspace spanned by $L(\bc)$ ($\bc\in\Z_{\ge
  0}^\ell$) is independent of the choice of a reduced expression of $w$.
This subspace is $\Un(w)$. Moreover $L(\bc)$ gives a basis of
$\Un(w)$. It can be shown that $\Un(w)$ is a subalgebra of $\Un$.

It is known that $\Un(w)$ is compatible with the dual canonical base,
i.e., $\Un(w)\cap \B^\up(\infty)$ is a base of $\Un(w)$. This is an
interpretation of the main result due to Lusztig
\cite[Th.~1.2]{Lu-braid}, based on an earlier work by Saito
\cite{Saito-PBW}. (See \cite[Th.~4.25]{Kimura} for the current
statement.)

Let $\B^\up(w) \defeq \Un(w)\cap \B^\up(\infty)$.
Let $\B(w)\subset \B(\infty)$ be the corresponding subset in the
canonical base.
Then \cite[Prop.~8.3]{Lu-braid} gives a parametrization of $\B(w)$ as
follows.
Let $\LL(\infty)$ be the $\A_0$-form used in the definition of the
crystal base. Then one shows $L(\bc)\in\LL(\infty)$ and the set $\{
L(\bc)\bmod q\LL(\infty)\}$ is equal to $\B(w)$, where $\B(w)$ is
considered as a subset of $\LL(\infty)/q\LL(\infty)$. Let us denote by
$b(\bc)\in\B(w)$ the canonical base element corresponding to
$L(\bc)\bmod q\LL(\infty)$. Therefore $b(\bc)\equiv L(\bc) \mod
q\LL(\infty)$.

The argument in the proof of \cite[Th.~3.13]{MR2066942} shows that the
transition matrix between the base $\{ L(\bc)\}$ and $\{ b(\bc)\}$ is
upper triangular with respect to the lexicographic order on $\{ \bc\}$.

It is known that $\{ L(\bc)\}$ is orthogonal with respect to $(\ ,\ )$
(see \cite[Prop.~40.2.4]{Lu-book}). Therefore we can deduce the
corresponding relation between $b^\up(\bc)$ and $L^\up(\bc) =
L(\bc)/(L(\bc),L(\bc))$, where $b^\up(\bc)$ is the dual canonical base
element corresponding to $b(\bc)$.  (See \cite[Th.~4.29]{Kimura}.)

\subsection{$\B(w)$ and Kashiwara operators}\label{subsec:Bw}

Let us give a characterization of $\B(w)$ in terms of Kashiwara
operators on $\B(\infty)$. Recall that $\B(\infty)$ is an abstract
crystal, and hence has maps $\wt\colon \B(\infty)\to P$,
$\varepsilon_i\colon \B(\infty) \to \Z$, $\varphi_i\colon
\B(\infty)\to \Z$ ($i\in I$) together with Kashiwara operators
$\te_i\colon\B(\infty)\to \B(\infty)\sqcup\{0\}$,
$\tf_i\colon\B(\infty)\to \B(\infty)\sqcup\{0\}$ satisfying certain
axioms.
We denote by $u_\infty$ the element in $\B(\infty)$ corresponding to
$1$ in $\Un$.
There is also an operator $\ast\colon \B(\infty)\to \B(\infty)$, which
corresponds to the anti-involution $\ast\colon\Un\to \Un$ given by
$f_i\mapsto f_i$.
Therefore we have another set of maps and operators
$\varepsilon_i^* = \varepsilon_i\ast$,
$\varphi_i^* = \varphi_i\ast$,
$\te_i^* = \ast\te_i\ast$, $\tf_i^* = \ast\tf_i\ast$.

Let $w =
s_{i_1}s_{i_2}\cdots s_{i_\ell}$ as above. For $i=i_1$, we have
\begin{gather*}
  \varepsilon_i(b) = c_1,
\\
  \te_i b(\bc) =
  \begin{cases}
    b(\bc') & \text{if $c_1\neq 0$}, \\
    0 & \text{if $c_1 = 0$},
  \end{cases}\qquad
  \tf_i b(\bc) = b(\bc''),
\end{gather*}
where $\bc' = (c_1-1,c_2,\dots, c_\ell)$, $\bc'' = (c_1+1,c_2,\dots,
c_\ell)$.
In particular, $\B(w)$ is invariant under $\te_i$, $\tf_i$.

Saito \cite[Cor.~3.4.8]{Saito-PBW} introduced a bijection 
\begin{equation*}
  \Lambda_i\colon \{ b\in\B(\infty) \mid \varepsilon_i^*(b) = 0\}
  \to \{ b\in\B(\infty) \mid \varepsilon_i(b) = 0\}
\end{equation*}
by
\begin{equation*}
  \Lambda_i(b) = (\tf_i^*)^{\varphi_i(b)} (\te_i)^{\varepsilon_i(b)} b,
\qquad
  \Lambda_i^{-1}(b) = (\tf_i)^{\varphi_i^*(b)} (\te_i^*)^{\varepsilon_i^*(b)} b.
\end{equation*}
This is related to the braid group operator as follows. Let us
consider $w = s_{i_1} s_{i_2}\cdots s_{i_\ell}$ and $w' = s_{i_2}
\cdots s_{i_\ell} s_{i_1} = s_{i_1} w s_{i_1}$ and corresponding PBW
base elements
\begin{gather*}
  L = f_{i_1}^{(c_1)} T_{i_1}(f_{i_2}^{(c_2)})\cdots
  (T_{i_1}\cdots T_{i_{\ell-1}})(f_{i_\ell}^{(c_\ell)}),
\\
  L' = f_{i_2}^{(c_2')} T_{i_2}(f_{i_3}^{(c_3')}) \cdots
  (T_{i_2}\cdots T_{i_\ell})(f_{i_1}^{(c_1')}),
\end{gather*}
for $(c_1,\dots,c_\ell)\in\Z_{\ge 0}^\ell$, $(c_2',\dots,
c_\ell',c_1')\in\Z_{\ge 0}^\ell$. If $c_1 = 0$, $c_2 = c'_2$, \dots,
$c_\ell = c'_\ell$, $0 = c'_1$, we have $L = T_{i_1} L'$. Let $b$,
$b'$ be the canonical base elements corresponding to $L$ and $L'$
respectively.
Then Saito \cite[Prop.~3.4.7]{Saito-PBW} proved that the corresponding
canonical base elements are related by $b = \Lambda_{i_1} b'$.
As a corollary of this result, we have a bijection
\begin{equation*}
  \{ b\in \B(w') \mid \varepsilon_i^*(b) = 0\}
  \xrightarrow{\Lambda_i}
  \{ b\in \B(w) \mid \varepsilon_i(b) = 0\}.
\end{equation*}
This together with the invariance of $\B(w)$ under $\te_{i_1}$,
$\tf_{i_1}$ gives a characterization of $\B(w)$ inductively in the
length of $w$, starting from $\B(1) = \{ u_\infty\}$.

\subsection{A subcategory $\mathcal C_w$}\label{subsec:Cw}

In view of \subsecref{subsec:lag} it is natural to look for a
characterization of $\B(w)$ in terms of Lusztig's lagrangian
subvarieties $\Lambda_V$, or the representation theory of the
preprojective algebra.
It turns out to be related to the subcategory introduced by
Buan-Iyama-Reiten-Scott \cite{BIRS}, and further studied by
Geiss-Leclerc-Schr\"oer \cite{GLSc} and Baumann-{Kamnitzer}-{Tingley}
\cite{BKT}.

We do not recall the definition of the subcategory of the category of
finite-dimensional nilpotent representations of the preprojective
algebra, denoted by $\mathcal C_w$ following \cite{GLSc}, here. This
is because there are many equivalent definitions, and the author does
not know what is the best for our purpose. See the above papers.

Let $\Lambda_V^w = \{ B\in\Lambda_V \mid B\in\mathcal C_w\}$, where we
identify $B$ with the corresponding representation of the
preprojective algebra.
This is an open subvariety in $\Lambda_V$ (see \cite[Lem.~7.2]{GLSc2}).
We set 
\begin{equation*}
  C_0^w(\Lambda_V) =
  \{ f\in C_0(\Lambda_V)\mid \text{$f(X) = 0$ for $X\in \mathcal C_w$}\}.
\end{equation*}
Therefore $C_0(\Lambda_V)/C_0^w(\Lambda_V)$ consists of constructible
functions on $\Lambda_V^w$.

Let us consider
\begin{equation*}
  C_0^w(\Lambda_V)^\perp = 
  \{ \xi\in C_0(\Lambda_V)^* \mid 
  \text{$\langle f, \xi\rangle = 0$ for $f\in C_0^w(\Lambda_V)$}\}.
\end{equation*}
This space is spanned by an evaluation at a point $X\in \Lambda_V^w$.
As $\mathcal C_w$ is an additive category (in fact, it is closed under
extensions), we have
\begin{equation*}
  C_0^w(\Lambda_T)^\perp \cdot C_0^w(\Lambda_W)^\perp
  \subset C_0^w(\Lambda_V)^\perp,
\end{equation*}
where the multiplication is given by the transpose of $r$, the natural
cocommutative coproduct on $\mathbf U(\mathfrak n)$. This follows from
the interpretation of $r$ explained in \subsecref{subsec:dualsemi}.

Therefore $\bigoplus C_0^w(\Lambda_V)^\perp \subset(C_0)_{\gr}^*\cong
\mathbf U(\mathfrak n)^*_{\gr}$ is a subalgebra.
By \cite[Th.~3.3]{GLSc} it is the $\C[N(w)]$, which is the $q=1$ limit
of $\Uq^-(w)$ (\cite[Th.~4.44]{Kimura}).

By \cite[Th.~3.2]{GLSc} $\C[N(w)]$ is compatible with the dual
semicanonical basis, i.e., the intersection $\{ \rho_{\Lambda_b} \mid
b\in\B(\infty) \} \cap\C[N(w)]$ is a base of $\C[N(w)]$.
This base consists of $\rho_{\Lambda_b}$ such that $\Lambda_b$
intersects with the open subvariety $\Lambda_V^w$. The intersection
$\Lambda_b\cap \Lambda_V^w$ is an open dense subset of $\Lambda_b$.

Finally we have
\begin{equation*}
  \{ \rho_{\Lambda_b} \mid
  b\in\B(\infty) \} \cap\C[N(w)]
  = \{ \rho_{\Lambda_b} \mid b\in \B(w) \}.
\end{equation*}
This follows from \cite[\S5.5]{BKT} and the characterization of
$\B(w)$ in \subsecref{subsec:Bw}.

\subsection{Cluster algebras and $\mathcal C_w$}\label{subsec:cluster}

Geiss-Leclerc-Schr\"oer \cite{GLSc} have introduced a structure of the
cluster algebra, in the sense of Fomin-Zelevinsky \cite{FomZel}, on
$\C[N(w)]$. One of their main results says that dual semicanonical
base of $\C[N(w)]$ contains {\it cluster monomials}.
We review their theory only briefly here. See the original paper for
more detail.
The construction is based on $\mathcal C_w$ in \subsecref{subsec:Cw}.

A $\Lambda$-module $T$ is {\it rigid\/} if $\Ext^1_\Lambda(T,T) =
0$. It is easy to see from the formula
\(
  \dim\Ext^1_\Lambda(T,T) = 2\dim\Hom_\Lambda(T,T) - (\dim V,\dim V)
\)
(see e.g., \cite[Lem.~2.1]{GLSc})
that this is equivalent to say that the orbit through $T$ is open
in $\Lambda_V$, where $V$ is the $I$-graded vector space underlying $T$,
and $(\ ,\ )$ is the Cartan matrix for the graph $\mathcal G$.
We say $T$ is {\it $\mathcal C_w$-maximal rigid\/} if
$\Ext^1_\Lambda(T\oplus X,X) = 0$ with $X\in\mathcal C_w$ implies $X$
is in $\operatorname{add}(T)$, the subcategory of modules which are
isomorphic to finite direct sums of direct summands of $T$.
In $\mathcal C_w$, there is a distinguished $\mathcal C_w$-maximal
module, denoted by $V_{\mathbf i}$ in \cite{GLSc}, where $\mathbf i =
(i_1,\dots,i_\ell)$ is a reduced expression of $w$.
It is conjectured that every $\mathcal C_w$-maximal rigid module $T$
is {\it reachable\/}, i.e., it is obtained from $V_{\mathbf i}$ using
a sequence of operations, called {\it mutations}. This will be
recalled below.

Let $T$ be a reachable $\mathcal C_w$-maximal rigid module, and $T =
T_1\oplus\cdots \oplus T_\ell$ be the decomposition into
indecomposables.
We assume that $T$ is {\it basic\/}, which means $T_i$ are pairwise
non-isomorphic.
In this case the number of summands $\ell$ is known to be equal to the
length of $w$.
If $R\in \operatorname{add}(T)$, it is a rigid $\Lambda$-module, and
hence the closure of the corresponding orbit is an irreducible
component of $\Lambda_V$ for an appropriate choice of $V$.
We denote it by $\rho_R\in \C[N(w)]$ the corresponding dual
semicanonical base elements.

From the identification of the coproduct $r$ on $\mathbf U(\mathfrak
n)^*_{\gr}$, we see that 
\begin{equation}\label{eq:monomial}
  \rho_R = \rho_{T_1}^{c_1}\cdots \rho_{T_\ell}^{c_\ell},
\end{equation}
where $R = T_1^{\oplus c_1}\oplus \cdots \oplus T_\ell^{c_\ell}$
with $c_i\in\Z_{\ge 0}$.
In the context of the cluster algebra theory, $\rho_{T_i}$ ($1\le i\le
\ell$) are called {\it cluster variables\/} and $\rho_R$ is a {\it
  cluster monomial}.

Let $T$ be a basic $\mathcal C_w$-maximal rigid module, and $T_k$ be a
non-projective indecomposable direct summand of $T$. The mutation
$\mu_k(T) $ is a new basic $\mathcal C_w$-maximal rigid module of the
form $(T/T_k)\oplus T_k^*$, where $T_k^*$ is another indecomposable
module.  Such a $\mu_k(T)$ exists and is uniquely determined from 
$T$ and $T_k$ (see \cite[Prop.~2.19]{GLSc} and the reference therein).
We have $\dim \Ext^1_\Lambda(T_k,T_k^*) =
\dim\Ext^1_\Lambda(T_k^*,T_k) = 1$ and
\begin{equation}\label{eq:exchange}
  \rho_{T_k}\rho_{T_k^*} = \rho_{T'} + \rho_{T''},
\end{equation}
where $T'$, $T''\in\operatorname{add}(T/T_k)$ are given by short exact
sequences
\begin{equation*}
  0 \to T_k \to T'\to T_k^*\to 0, \qquad
  0 \to T_k^* \to T''\to T_k\to 0
\end{equation*}
respectively.

Geiss-Leclerc-Schr\"oer \cite{GLSq} have obtained a $q$-analog of the
result explained above, namely they have introduced a structure of the
quantum cluster algebra, in the sense of Berenstein-Zelevinsky
\cite{BerZel}, on the quantum unipotent subgroup $\Un(w)$.
(This result was conjectured in \cite{Kimura}.)
The construction is again based on $\mathcal C_w$.
Let $T$ be a reachable $\mathcal C_w$-maximal rigid module. For
$R\in\operatorname{add}(T)$, there is an element $Y_R\in\Un(w)$, which
satisfies the $q$-analog of (\ref{eq:monomial}, \ref{eq:exchange}):
\begin{equation}\label{eq:two_formulas}
  \begin{gathered}
    Y_R = q^{-\alpha_R} Y_{T_1}^{c_1}\cdots Y_{T_\ell}^{c_\ell},
\\
    Y_{T_k^*} Y_{T_k} = q^{[T_k^*,T_k]} (q^{-1} Y_{T'} + Y_{T''})
  \end{gathered}
\end{equation}
for appropriate explicit $\alpha_R$, $[T_k^*, T_k]\in\Z$.
See \cite[(10.17) and Prop.~10.5]{GLSq}.

One of the main conjectures in this theory is
\begin{Conjecture}\label{conj:qmon}
  All quantum cluster monomials $Y_R$ are contained in $\B^\up(w)$.
\end{Conjecture}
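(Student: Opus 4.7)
The plan is to induct on the mutation distance from the initial cluster $V_{\mathbf i}$, using the singular-support estimate \thmref{thm:est} to control how perverse sheaves representing quantum cluster variables behave under mutation. The candidate target is already determined: if $Y_R \in \B^{\up}(w)$, it must equal $b_R^{\up}$ for the unique $b_R \in \B(w)$ with $\Lambda_{b_R} = \overline{G_V \cdot R}$, since the classical Geiss-Leclerc-Schr\"oer theorem identifies $\rho_R$ with $\rho_{\Lambda_{b_R}}$ and Section~2.4 supplies the bijection $b^{\up} \leftrightarrow \rho_{\Lambda_b}$ for $b \in \B(w)$. So the task is to verify this expected identification.

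First I would treat the base case. Each indecomposable summand $V_{\mathbf i, k}$ of the initial cluster should satisfy $Y_{V_{\mathbf i, k}} = b^{\up}(\bc_k)$ for an explicit $\bc_k$, which one checks using Saito's PBW parametrization from Section~2.2 together with the explicit initial quantum cluster of \cite[\S10]{GLSq}. For a general $R \in \operatorname{add}(V_{\mathbf i})$, the first formula in \eqref{eq:two_formulas} writes $Y_R$ as a $q$-ordered monomial in the $Y_{V_{\mathbf i, k}}$, and I would need to check that such a $q$-ordered product at PBW vectors remains dual canonical, using the orthogonality of the PBW basis and the upper-triangular transition between $\{L(\bc)\}$ and $\{b(\bc)\}$ recalled in Section~2.1.

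For the inductive step, assume $Y_{T_j} = b^{\up}_{T_j}$ for every cluster variable in a current cluster $T$. Rewriting \eqref{eq:two_formulas} gives
\begin{equation*}
Y_{T_k^*} = q^{-[T_k^*,T_k]} (q^{-1} Y_{T'} + Y_{T''}) Y_{T_k}^{-1}.
\end{equation*}
Both $Y_{T'}$ and $Y_{T''}$ are ordered monomials in dual canonical basis elements by hypothesis and correspond via $\Phi$ to classes of complexes in $\mathcal Q_V$ built by iterated induction (the adjoint of $\Res$). The estimate \thmref{thm:est} constrains each simple perverse summand $L_b$ occurring in these complexes to have $b \in \B(w)$, so the right-hand side is an $\A$-linear combination of elements of $\B^{\up}(w)$. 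Combined with the Section~4 conjectures — which predict that $L_{b_{T_k^*}}$ is characterized by having $\overline{G_V \cdot T_k^*}$ as a distinguished irreducible component of its singular support — this should pin down $Y_{T_k^*}$ as $b^{\up}_{T_k^*}$.

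The main obstacle is the cancellation inherent in the exchange relation: $(q^{-1} Y_{T'} + Y_{T''})$ is a priori supported on many dual canonical basis elements, and showing that the right-hand side collapses to a single one after multiplication by $Y_{T_k}^{-1}$ requires more than set-theoretic control of singular supports. Geometrically it seems to demand a refinement of \thmref{thm:est} that tracks the \emph{multiplicity} of each irreducible Lagrangian component in the singular support, not merely which components occur. Establishing such a multiplicity-sensitive estimate, and extracting from it the precise geometric statement needed to match the two sides of the exchange relation, is where the bulk of the remaining work lies and is essentially the content the Section~4 conjectures are designed to supply.
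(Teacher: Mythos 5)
The statement you are trying to prove is Conjecture~\ref{conj:qmon}, which the paper states as an open conjecture and does \emph{not} prove, so there is no ``paper's own proof'' to compare against. What the paper offers in its direction is a proposed route: establish the two formulas in \eqref{eq:two_formulas} as identities among dual canonical base elements (which would give Conjecture~\ref{conj:qmon} by induction on the number of mutations), and then use Theorem~\ref{thm:est} together with the further open Conjectures~\ref{conj:1} and~\ref{conj:2} to argue heuristically that $b^\up_{T_1}b^\up_{T_2}$ is a multiple of $b^\up_R$ and that $b^\up_{T_k^*}b^\up_{T_k}$ is a combination of $b^\up_{T'}$ and $b^\up_{T''}$. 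The paper explicitly stops short of a proof, writing ``a refinement of this argument probably proves\dots'' and ``hopefully gives the second formula.''

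Your proposal is a faithful reconstruction of that intended route, and you correctly identify the central obstruction: the singular-support estimates (Theorem~\ref{thm:est}, and even the unproved Conjectures~\ref{conj:1}, \ref{conj:2}) are set-theoretic and say nothing about multiplicities in the characteristic cycle, so they cannot by themselves force the exchange relation to collapse onto a single dual canonical base element, nor pin down the $q$-power. That gap is real and is exactly why the statement remains a conjecture. Two smaller points worth flagging: the rearrangement $Y_{T_k^*}=q^{-[T_k^*,T_k]}(q^{-1}Y_{T'}+Y_{T''})Y_{T_k}^{-1}$ requires working in the quantum torus or skew-field of fractions, since $Y_{T_k}$ is not invertible in $\Un(w)$; and the products of dual canonical base elements are governed by the structure constants $r^{b_1,b_2}_b$ coming from the restriction functor $\Res$ (the dual algebra structure is the transpose of $r$), not by Lusztig's induction functor, so the parenthetical ``the adjoint of $\Res$'' should be removed. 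In short, your attempt mirrors the paper's own plan of attack, but it does not close the conjecture, and neither does the paper.
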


This is closely related to their earlier open orbit conjecture
\cite[\S18.3]{GLSc}:
\begin{Conjecture}
  Suppose that an irreducible component $\Lambda_b$ of $\Lambda_V$
  contains an open $G_V$-orbit, then the corresponding dual
  semicanonical base element $\rho_{\Lambda_b}$ is equal to the
  specialization of the corresponding canonical base element $b$ at
  $q=1$.
\end{Conjecture}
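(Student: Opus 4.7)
The plan is to express both sides of the claimed equality in terms of the characteristic cycle of the perverse sheaf $L_b$, and then to exploit the open orbit hypothesis to show that this cycle is irreducible.

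First, I would set up the microlocal dictionary. There should be a natural $\Q$-linear map
\begin{equation*}
  \Xi\colon \bigoplus_V \scr K(\mathcal Q_V)|_{q=1}\longrightarrow \bigoplus_V \Cons_0(\Lambda_V)
\end{equation*}
sending the class of a perverse sheaf $L$ to the constructible function whose value on a smooth point of an irreducible component $\Lambda_{b'}$ is the multiplicity of $\Lambda_{b'}$ in the characteristic cycle $\operatorname{CC}(L)$. By Kashiwara-Saito, $\Lambda_b$ appears in $\operatorname{CC}(L_b)$ with multiplicity one, so the transition matrix between $\{L_b|_{q=1}\}$ and the dual semicanonical basis $\{\rho_{\Lambda_b}\}$ is unitriangular in a suitable order. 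Dualising with respect to the pairings of \subsecref{subsec:dual} and \subsecref{subsec:dualsemi}, the conjecture reduces to the single statement $\operatorname{CC}(L_b)=[\Lambda_b]$, i.e.\ no $\Lambda_{b'}$ with $b'\neq b$ contributes to the characteristic cycle of $L_b$.

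Next I would verify this vanishing under the open orbit hypothesis. If $\Lambda_b$ contains an open $G_V$-orbit through a rigid $\Lambda$-module $T$, then the projection $\pi\colon T^*\HomE_V\to \HomE_V$ carries this orbit onto a single $G_V$-orbit $\mathcal O\subset \HomE_V$, and $L_b$ should be the intersection cohomology extension of the trivial local system on $\mathcal O$ (up to a shift) to its closure. The multiplicities of the other $\Lambda_{b'}$ in $\operatorname{CC}(L_b)$ are then controlled by local Euler obstructions of $L_b$ along deeper strata of $\overline{\mathcal O}$, and the rigidity condition $\Ext^1_\Lambda(T,T)=0$ should force these obstructions to vanish.

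The main obstacle is precisely this last point: ruling out the microlocal contributions from deeper strata in $\overline{\mathcal O}$. A direct attack would combine the classical formula for the characteristic cycle in terms of local Euler obstructions with a representation-theoretic identity translating them into homological data about $\Lambda$-modules. An approach more in the spirit of this paper is instead inductive: apply the estimate \thmref{thm:est} to compare the components of $\operatorname{CC}(\Res L_b)$ against the geometric coproduct of $\rho_{\Lambda_b}$ on the dual semicanonical side, and then propagate the desired equality through the Kashiwara crystal graph, starting from the trivial case $b=u_\infty$. The hard part will be verifying that this induction closes up: one needs the open orbit property to interact compatibly with the crystal operators, so that at each step the inductive hypothesis $\operatorname{CC}(L_{b'})=[\Lambda_{b'}]$ is available for the relevant smaller $b'$.
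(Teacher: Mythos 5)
This statement is the Geiss--Leclerc--Schr\"oer open orbit conjecture from \cite[\S18.3]{GLSc}, and the paper does \emph{not} prove it. It is quoted only to record its relation to Conjecture~\ref{conj:qmon} (which implies it for reachable rigid modules) and, via the conjecture of \cite[\S1.5]{GLS1}, to Conjecture~\ref{conj:2}. So there is no proof in the paper to compare against, and any complete argument you give would be a new result.

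As for the proposal itself, the reduction at the heart of your first paragraph has the variance backwards. Write (assuming the \cite[\S1.5]{GLS1} conjecture) $b'|_{q=1}=\sum_Y m^{b'}_Y f_Y$, where $m^{b'}_Y$ is the multiplicity of $Y$ in $\operatorname{CC}(L_{b'})$ and $f_Y$ is the semicanonical basis. Then $\langle b'|_{q=1},\rho_{\Lambda_b}\rangle=m^{b'}_{\Lambda_b}$, while $\langle b'|_{q=1},b^\up|_{q=1}\rangle=\delta_{bb'}$. Hence $\rho_{\Lambda_b}=b^\up|_{q=1}$ is equivalent to $m^{b'}_{\Lambda_b}=\delta_{bb'}$ for all $b'$, i.e.\ the open-orbit component $\Lambda_b$ does not occur in $\operatorname{CC}(L_{b'})$ for any $b'\neq b$. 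You instead reduce to $\operatorname{CC}(L_b)=[\Lambda_b]$, i.e.\ $m^b_{\Lambda_{b'}}=\delta_{bb'}$: this says that $L_b$ itself has irreducible characteristic cycle, which is the transpose condition and is not what the conjecture asserts. (The correct condition is exactly what the paper's Conjecture~\ref{conj:2} says for $R$ rigid; see the paragraph ``Dually, an irreducible component $Y=\Lambda_b$ cannot be contained in other $SS(L_{b'})$ \dots''.) Your version would require $\operatorname{CC}(L_b)$ to be irreducible, which already fails to hold in general (the paper notes $SS(L_b)$ is reducible for some $b$ in type $A_5$), and there is no reason the open-orbit hypothesis forces irreducibility of $\operatorname{CC}(L_b)$.

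Beyond this, two further points. First, the ``microlocal dictionary'' $\Xi$ you invoke is not known: the statement that $b|_{q=1}$ expands in the semicanonical basis with characteristic-cycle coefficients \emph{is} the unproved conjecture of \cite[\S1.5]{GLS1}, so any such reduction is conditional on it (the paper is explicit about this dependence; your write-up silently assumes it). Second, even with the reduction corrected, the remaining assertion --- that no other $\operatorname{CC}(L_{b'})$ contains the open-orbit component --- is precisely the open content of Conjecture~\ref{conj:2}, and your proposed mechanism (rigidity forces vanishing of local Euler obstructions along deeper strata, or an induction through the crystal graph) is a hope rather than an argument; you acknowledge this yourself. The paper offers nothing beyond the unconditional special case where $\Lambda_R$ is the zero section and $G_V$ has an open orbit in $\HomE_V$, which is treated by a support-equals-everything argument rather than by Euler obstructions.
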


In fact, this is implied by Conj.~\ref{conj:qmon} for reachable rigid
modules.

\section{Singular supports under the restriction}

Let $SS(L)$ denote the singular support of a complex $L\in\mathcal Q_V$.
See \cite{KaSha} for the definition. Lusztig proved that
$SS(L)\subset\Lambda_V$ \cite[13.6]{Lu-can2}.
In fact, we have finer estimates
\begin{equation}\label{eq:SSest}
  \Lambda_b \subset SS(L_b) \subset
  \bigcup_{\substack{b'\in \B(\infty)\\ 
      \forall i \ \varepsilon_i(b')\ge \varepsilon_i(b)}}
  \Lambda_{b'}.
\end{equation}
See \cite[Thm.~6.2.2]{KS}, but note that there is a misprint. See
\cite[Lem.~8.2.1]{KS} for the correct statement.

These estimates give us some relation between the canonical base and
$\bigsqcup_V \Irr\Lambda_V$ via singular supports. We study
the behavior of singular supports under the functor $\Res$ in this
section.

\subsection{The statement}

In order to state the result, we prepare notation.

Let $f\colon Y\to X$ be a morphism. Let $TX$, $TY$ (resp.\ $T^*X$,
$T^*Y$) be tangent (resp.\ cotangent) bundles of $X$, $Y$ respectively.
Let $f^{-1}TX = Y\times_X TX$ (resp.\ $f^{-1} T^*X = Y\times_X T^* X$)
be the pull-back of $TX$ (resp.\ $T^*X$) by $f$. We have associated
morphisms
\begin{equation*}
  T^* Y \xleftarrow{\lsp{t}f'} f^{-1} T^*X \xrightarrow{f_\pi} T^*X,
\end{equation*}
where $\lsp{t}f'$ is the transpose of the differential
$f' \colon TY \to f^{-1} TX = Y\times_X TX$.

We apply this construction for the morphisms $\iota$, $\kappa$ to
get morphisms
\begin{equation*}
  \begin{gathered}
  T^* E(W) \xleftarrow{\lsp{t}\iota'} 
  \iota^{-1} T^* \HomE_V
  \xrightarrow{\iota_\pi} T^* \HomE_V ,
\\
  T^* E(W) \xleftarrow{\lsp{t}\kappa'} 
  \kappa^{-1} T^*(\HomE_T\times \HomE_W)
  \xrightarrow{\kappa_\pi} T^* (\HomE_T\times \HomE_W).
  \end{gathered}
\end{equation*}

\begin{Theorem}\label{thm:est}
  We have
  \begin{equation*}
    SS(\Res(L))\subset \kappa_\pi(\lsp{t}\kappa^{\prime-1}(
    \lsp{t}\iota^{\prime} (\iota_\pi^{-1} (SS(L))))).
  \end{equation*}
\end{Theorem}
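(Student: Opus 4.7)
My plan is to decompose $\Res=\kappa_!\iota^*[d]$ and apply Kashiwara-Schapira's microlocal functoriality for singular supports \cite{KaSha} to each factor in turn; the shift $[d]$ is invisible to $SS$, so the proof reduces to two basic inclusions which, composed, give the theorem.

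The first is the estimate for inverse image,
\[
  SS(\iota^* L) \subset \lsp{t}\iota'(\iota_\pi^{-1}(SS(L))),
\]
which holds unconditionally in the Kashiwara-Schapira formalism. Since $\iota\colon E(W)\hookrightarrow\HomE_V$ is the closed embedding of the linear subspace of $B$ preserving $W$, the formula applies directly to $L$.

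The second is the estimate for the compactly supported direct image,
\[
  SS(\kappa_! G)\subset \kappa_\pi(\lsp{t}\kappa^{\prime-1}(SS(G))),
\]
applied to $G=\iota^* L$. Since $\kappa\colon E(W)\to\HomE_T\times\HomE_W$ is a vector-bundle projection and hence not globally proper, this step requires justification: one must verify that $\kappa$ restricts to a proper map on $\mathrm{supp}(\iota^* L)$. This would use the $G_V$-equivariance of $L\in\mathcal Q_V$, the constructibility of $L$ with respect to the stratification by $G_V$-orbit closures, and the Lagrangian condition $SS(L)\subset\Lambda_V$, which together constrain the supports along the fibers of $\kappa$.

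Composing the two estimates, together with the fact that $SS$ is insensitive to the shift $[d]$, gives the desired
\[
  SS(\Res L)\subset \kappa_\pi(\lsp{t}\kappa^{\prime-1}(\lsp{t}\iota'(\iota_\pi^{-1}(SS(L))))).
\]
The inverse-image bound is formal, so the main obstacle is the pushforward step: establishing properness of $\kappa$ on $\mathrm{supp}(\iota^* L)$ is where the geometry of quiver representations genuinely enters, and is the place where the bulk of the verification work lies.
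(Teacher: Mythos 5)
Your decomposition $\Res=\kappa_!\iota^*[d]$ and the idea of chaining two microlocal estimates match the paper's strategy, but both of your justifications are wrong, and the second would actually fail.

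For the inverse image, the estimate $SS(\iota^*L)\subset\lsp{t}\iota'(\iota_\pi^{-1}(SS(L)))$ does \emph{not} hold unconditionally for a closed embedding; it requires $\iota$ to be non-characteristic for $L$, which has no reason to hold here. What \emph{is} unconditional is the weaker bound $SS(\iota^*L)\subset\iota^\#(SS(L))$ of \cite[Cor.~6.4.4]{KaSha}, where $\iota^\#$ involves the normal cone $C_{T^*_{E(W)}\HomE_V}(SS(L))$ and is in general strictly larger than $\lsp{t}\iota'(\iota_\pi^{-1}(SS(L)))$. The point of the paper is that, after fixing a splitting $V\cong W\oplus T$, the embedding $\iota$ becomes the zero section of the vector bundle $\HomE_V\to E(W)$, and since $SS(L)$ is conic the normal cone reproduces $SS(L)$ itself; this is exactly what makes $\iota^\#(SS(L))=T^*E(W)\cap SS(L)=\lsp{t}\iota'(\iota_\pi^{-1}(SS(L)))$ (equations \eqref{eq:fs}, \eqref{eq:is} in the paper). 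So the step you call ``formal'' is precisely where the geometric input (vector-bundle splitting plus conicity of $SS$) enters.

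For the proper direct image, your plan of proving that $\kappa$ is proper on $\operatorname{supp}(\iota^*L)$ cannot work. Take $L$ to be the constant sheaf on $\HomE_V$, a perfectly good object of $\mathcal Q_V$ with $SS(L)$ equal to the zero section, hence Lagrangian and $G_V$-equivariant; then $\iota^*L$ is constant on $E(W)$, and $\kappa\colon E(W)\to\HomE_T\times\HomE_W$ has positive-dimensional affine fibers, so $\kappa$ is as far from proper on the support as possible. Equivariance and $SS(L)\subset\Lambda_V$ give no control here. What the paper uses instead is that $\iota^*L$ is \emph{conic} with respect to the $\R^+$-action on the fibers of the vector bundle $\kappa$, and then invokes the estimates for conic sheaves on vector bundles, \cite[Prop.~5.5.4 and (5.5.11)]{KaSha}, which give $SS(\kappa_!(\iota^*L))\subset T^*(\HomE_T\times\HomE_W)\cap SS(\iota^*L)=\kappa_\pi\lsp{t}\kappa^{\prime-1}SS(\iota^*L)$ without any properness. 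You should replace your properness argument by this conicity argument.
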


Let us remark that the proof shows the following statement. Choose a
complementary subspace of $W$ in $V$, and identify $V$ with $W\oplus
T$. Then we have the induced embedding
$T^*(\HomE_T\times\HomE_W)\subset T^*\HomE_V$. Then we have
\begin{equation}\label{eq:0sect}
  SS(\Res(L))\subset T^*(\HomE_T\times\HomE_W)\cap SS(L).
\end{equation}

The proof occupies the rest of this section.

\subsection{Inverse image}

If $\iota$ were smooth, we would have $SS(\iota^* L)\subset
\lsp{t}\iota^{\prime} (\iota_\pi^{-1} (SS(L)))$ by
\cite[Prop.~5.4.5]{KaSha}. And if $\kappa$ were proper, we would have
$SS(\kappa_!\iota^* L) \subset\break[3]
\kappa_\pi(\lsp{t}\kappa^{\prime-1}(SS(\iota^*L)))$ by
\cite[Prop.~5.4.4]{KaSha}. Therefore the assertion follows. However
neither are true, so we need more refined versions of these estimates.

In order to study the behavior of the singular support under the
pull-back by a non-smooth morphism, we need several more notions
related to cotangent manifolds from \cite[Ch.~VI]{KaSha}.

\begin{NB}
  In order to study the behavior of the singular support under the
  pull-back, we recall the notion of a {\it non-characteristic\/}
  property for a morphism with respect to a closed conic subset of a
  subset of the cotangent bundle. See \cite[Ch.~VI]{KaSha} for more
  detail.
\end{NB}

We first recall the normal cone to $S$ along $M$ briefly.
Suppose that $M$ is a closed submanifold of a manifold $X$. Let
$T_MX$ denote the normal bundle of $M$ in $X$.
Then one can define a new manifold $\tilde{X}_M$, which connects $X$
and $T_MX$ in the following way: there are two maps
\begin{equation*}
  p\colon \tilde{X}_M \to X, \quad t\colon \tilde{X}_M\to \R
\end{equation*}
such that $p^{-1}(X\setminus M)$, $t^{-1}(\R\setminus\{0\})$ and
$t^{-1}(0)$ are isomorphic to $(X\setminus M)\times(\R\setminus\{0\})$,
$X\times(\R\setminus\{0\})$ and $T_MX$ respectively.
In our application, $M$ is the zero section of a vector bundle $X$,
and hence the normal bundle $T_MX$ is $X$ itself. In this case,
$\tilde{X}_M$ is $X\times \R$ and $p$, $t$ are the first and second
projections. A general definition is in \cite[\S4.1]{KaSha} for
an interested reader.

Let $\Omega$ be the inverse image of $\R^+$ under $t$, and $\tilde p$
the restriction of $p$ to $\Omega$.

Let $S$ be a subset of $X$. The {\it normal cone to $S$ along $M$},
denoted by $C_M(S)$ is defined by
\begin{equation*}
  C_M(S) \defeq T_M X \cap \overline{\tilde p^{-1}(S)}.
\end{equation*}
If $M$ is the zero section of a vector bundle $X$ as before, we have
$C_M(S)$ is identified with $S$ itself under $T_MX \cong X$.

Now we return back to a morphism $f\colon Y\to X$. We assume that $f$
is a closed embedding for simplicity. We consider the conormal bundle
$T^*_YX$. We denote the projection $T^*_YX\to Y$ by $p$. We have
morphisms
\(
  T^* ({T}^*_Y X) \xleftarrow{\lsp{t}{{p}}'} {p}^{-1} T^*Y 
  \xrightarrow{{p}_\pi} T^*Y
\)
as before.

\begin{NB}
Now we return back to a morphism $f\colon Y\to X$. We assume that $f$
is a closed embedding for simplicity. We consider the conormal bundle
$T^*_YX$ and the complement of the zero section $\dot{T}^*_YX$. We
denote the projection $T^*_YX\to Y$ by $p$ and its restriction to
$\dot{T}^*_YX$ by $\dot{p}$. We have morphisms
\begin{equation*}
  T^* (\dot{T}^*_Y X) \xleftarrow{\lsp{t}{\dot{p}}'} \dot{p}^{-1} T^*Y 
  \xrightarrow{\dot{p}_\pi} T^*Y
\end{equation*}
as before.
\end{NB}

We consider $T^*Y$ as a submanifold of $T^*(T^*_YX)$ via the
composition 
\begin{equation*}
  T^*Y\hookrightarrow p^{-1}T^* Y \xrightarrow{\lsp{t}p'}
  T^*(T^*_Y X).
\end{equation*}
See \cite[(5.5.10)]{KaSha}.

We consider ${T}^*_YX$ as a closed submanifold of ${T}^*X$. So we can
define the normal cone to a subset of $T^*X$ along $T^*_YX$.

\begin{NB}
We consider $\dot{T}^*_YX$ as a closed submanifold of $\dot{T}^*X$,
the complement of the zero section of the cotangent bundle $T^*X$ of $X$.
\end{NB}

If $f\colon Y\to X$ is the embedding of the zero section to a vector
bundle $X$, we can identify $T^*_Y X$ with the dual vector bundle
$X^*$. Then $T^*_Y X \to T^*X$ is also the embedding of the zero
section to a vector bundle. In fact, we have a natural identification
$T^*X \cong T^* X^*$, therefore $T^*_Y X \to T^*X$ is identified with
$X^* \to T^*X^*$. Therefore $C_{T^*_YX}(A)$ is identified with $A$
itself under the isomorphism $T_{T^*_YX}(T^*X)\cong T^*X$.

Note also that $T^*(T^*_YX)$ is identified with $T^*X^*\cong
T^*X$. Under this identification, we have an isomorphism $p^{-1}T^*Y
\cong f^{-1}T^*X$ which gives a commutative diagram
\begin{equation}\label{eq:CD}
  \begin{CD}
    T^*X @<{f_{\pi}}<< f^{-1} T^*X @>{\lsp{t}f'}>> T^* Y
\\
   @V{\cong}VV @V{\cong}VV @|
\\
    T^*(T^*_Y X) @<<{\lsp{t}p'}< p^{-1} T^* Y @>>{p_\pi}> T^* Y.
  \end{CD}
\end{equation}

\begin{NB}
  Let us take a coordinate system $(y,x)$ for $X$, where $y$ is a
  coordinate system for $Y$ and $x$ is a fiber coordinate of the vector
  bundle $X\to Y$. Take the corresponding coordinate system
  $(y,x;\xi,\zeta)$ for $T^*X$. Then we have
  \begin{equation*}
    \begin{CD}
      T^*(T^*_Y X) @<<{\lsp{t}p'}< p^{-1} T^* Y @>>{p_\pi}> T^* Y
\\
      \rotatebox{90}{$\in$} @. \rotatebox{90}{$\in$} @. \rotatebox{90}{$\in$}
\\
     (y,x;\xi,\zeta) @. (y,0;\xi,\zeta) @. (y,0;\xi,0)
    \end{CD}
  \end{equation*}
\end{NB}

Let $A$ be a conic subset of $T^*X$, i.e., invariant under the
$\R^+$-action, the multiplication on fibers. We define
\begin{equation*}
  f^\#(A) \defeq T^*Y\cap C_{{T}^*_YX}(A).
\end{equation*}
Here we identify $T_{{T}^*_YX}{T}^*X$ with $T^*({T}^*_YX)$.
See \cite[(6.2.3)]{KaSha}. Moreover we also have
\begin{equation*}
  f^\#(A)  = p_\pi\lsp{t}p^{\prime-1}(C_{{T}^*_YX}(A)).
\end{equation*}
See \cite[Lem.~6.2.1]{KaSha}.

If $f\colon Y\to X$ is the embedding of the zero section to a vector
bundle $X$, we have
\begin{equation}\label{eq:fs}
  f^\#(A) = T^* Y \cap A = \lsp{t}f' f_\pi^{-1}(A)
\end{equation}
by the commutative diagram \eqref{eq:CD}.

\begin{NB}
Let $A$ be a conic subset of $T^*X$, i.e., invariant under the
$\R^+$-action, the multiplication on fibers. We define
\begin{equation*}
  f_\infty^\#(A) \defeq
  \dot{p}_\pi\lsp{t}{\dot{p}}^{\prime-1}(C_{\dot{T}^*_YX}(A)).
\end{equation*}
Here we identify $T_{\dot{T}^*_YX}\dot{T}^*X$ with $T^*(\dot{T}^*_YX)$.
See \cite[(6.2.3)]{KaSha}.
\end{NB}

\begin{NB}
Let $V$ be a subset of $T^*Y$. We say $f$ is {\it
  non-characteristic\/} for $A$ on $V$ if $f_\infty^\#(A)\cap
V=\emptyset$ \cite[Def.~6.2.7]{KaSha}.
Let $F\in\scr D(X)$. We say $f$ is {\it non-characteristic\/} for $F$
on $V$ if $f$ is so for $SS(F)$ on $V$.
In this case we have \cite[Cor.~6.4.4]{KaSha}
\begin{equation}
  SS(f^{-1}F) \cap V \subset \lsp{t}f'(f_\pi^{-1}(SS(F))).
\end{equation}
\end{NB}

\subsection{Spaces}

Let us describe relevant spaces explicitly.

As $E(W)$ is a linear subspace of $\HomE_V$, we have $T^* E(W) =
E(W)\times E(W)^*$ and $E(W)^*$ is identified with
$\HomE_V^* / E(W)^\perp$, where
\begin{equation*}
  E(W)^\perp = 
  \left\{ B'\in \HomE_V^*\, \middle|\,
    B'(W)=0, \Ima B'\subset W \right\}.
\end{equation*}
Taking a complementary subspace of $W$ in $V$, we identify $T$ as an
$I$-graded {\it subspace\/} of $V$. We then have the direct sum
decomposition $V \cong W\oplus T$ and the induced projection $V\to W$.
Then we have matrix notations of $B$ and $B'$:
\begin{equation*}
  B =
  \begin{pmatrix}
    B_{TT} & 0 
    \\
    B_{WT} & B_{WW}
  \end{pmatrix},
\quad
  B' =
  \begin{pmatrix}
    B'_{TT} & B'_{TW}
    \\
    0 & B'_{WW}
  \end{pmatrix}.
\end{equation*}
Similarly the space $\iota^{-1}(T^*\HomE_V)$ is nothing but
$E(W)\times \HomE_V^*$, and identified with the space of linear maps
$B$, $B'$ of the forms
\begin{equation*}
  B =
  \begin{pmatrix}
    B_{TT} & 0 
    \\
    B_{WT} & B_{WW}
  \end{pmatrix},
\quad
  B' =
  \begin{pmatrix}
    B'_{TT} & B'_{TW}
    \\
    B'_{WT} & B'_{WW}
  \end{pmatrix}.
\end{equation*}

The morphism $\lsp{t}\iota'\colon \iota^{-1}(T^*\HomE_V)\to T^*E(W)$
is induced by the projection $\HomE_V^*\to E(W)^*$. In the matrix
notation, it is given by forgetting the component $B'_{WT}$.

The morphism $\iota_\pi\colon \iota^{-1}(T^*\HomE_V)\to T^*\HomE_V$ is
the embedding $E(W)\times \HomE_V^*\to \HomE_V\times \HomE_V^*$.

For $(B,B')\in T^*(\HomE_T\times\HomE_W)$, we have
\begin{equation*}
  B =
  \begin{pmatrix}
    B_{TT} & 0 
    \\
    0 & B_{WW}
  \end{pmatrix},
\quad
  B' =
  \begin{pmatrix}
    B'_{TT} & 0
    \\
    0 & B'_{WW}
  \end{pmatrix}.
\end{equation*}

For $(B,B')\in \kappa^{-1}(T^*(\HomE_T\times\HomE_W))$, we have
\begin{equation*}\label{eq:kappa}
  B =
  \begin{pmatrix}
    B_{TT} & 0 
    \\
    B_{WT} & B_{WW}
  \end{pmatrix},
\quad
  B' =
  \begin{pmatrix}
    B'_{TT} & 0
    \\
    0 & B'_{WW}
  \end{pmatrix}.
\end{equation*}

The morphisms
\begin{equation*}
  T^*E(W) \xleftarrow{\lsp{t}\kappa'} \kappa^{-1}(T^*(\HomE_T\times\HomE_W))
  \xrightarrow{\kappa_\pi} T^*(\HomE_T\times\HomE_W)
\end{equation*}
are given by taking appropriate matrix entries of $B$, $B'$.

Since we have chosen an isomorphism $V\cong W\oplus T$, we have the
projection $p\colon \HomE_V\to E(W)$ which gives a structure of a
vector bundle so that $\iota$ is the embedding of the zero section.
Therefore we have the commutative diagram \eqref{eq:CD} for $f =
\iota$, and hence
\begin{equation}\label{eq:is}
  \iota^\#(A) = T^*E(W) \cap A =
  \lsp{t}\iota' \iota_{\pi}^{-1}(A)
\end{equation}
by \eqref{eq:fs}.

\begin{NB}
The conormal bundle $T^*_{E(W)}\HomE_V$ of $E(W)$ in $\HomE_V$ is
$E(W)\times E(W)^\perp$.  In the matrix notation, it is identified with
the space of linear maps $B$, $B'$ of the forms
\begin{equation*}
    B =
  \begin{pmatrix}
    B_{TT} & 0 
    \\
    B_{WT} & B_{WW}
  \end{pmatrix},
\quad
  B' =
  \begin{pmatrix}
    0 & 0
    \\
    B'_{WT} & 0
  \end{pmatrix}.
\end{equation*}
\begin{NB2}
The complement $\dot{T}^*_{E(W)}\HomE_V$ of the zero section is
$E(W)\times \dot{E}(W)^\perp$, where $\dot{E}(W)^\perp$ is the
complement of $0$. It is given by the condition $B'_{WT}\neq 0$.
\end{NB2}

The projection $p\colon T^*_{E(W)}\HomE_V\to E(W)$ is given by
the first projection $E(W)\times E(W)^\perp\to E(W)$.
We have
\begin{equation*}
  \begin{split}
  T^* (T^*_{E(W)} \HomE_V)
  & \cong E(W)\times E(W)^* \times E(W)^\perp \times (E(W)^\perp)^*
\\
  & \cong \HomE_V \times \HomE_V^* \cong T^* \HomE_V,
  \end{split}
\end{equation*}
where we have used isomorphisms
$E(W)^*\times E(W)^\perp \cong \HomE_V^*$ and
$E(W)\times (E(W)^\perp)^* \cong \HomE_V$.
The embedding $T^* E(W)\to T^* (T^*_{E(W)} \HomE_V)$ is identified with
the inclusion of
$E(W)\times E(W)^* \cong E(W)\times E(W)^*\times\{0\}\times\{0\}$.

We also have
\begin{equation*}
  p^{-1}T^* E(W)
  \cong E(W)\times E(W)^* \times E(W)^\perp.
\end{equation*}
For $(B,B')\in p^{-1} T^* E(W)$, we have
\begin{equation*}
    B =
  \begin{pmatrix}
    B_{TT} & 0 
    \\
    B_{WT} & B_{WW}
  \end{pmatrix},
\quad
  B' =
  \begin{pmatrix}
    B'_{TT} & B'_{TW}
    \\
    B'_{WT} & B'_{WW}
  \end{pmatrix}.
\end{equation*}
The morphisms
\(
      T^* (T^*_Y X) \xleftarrow{\lsp{t}p'} p^{-1} T^*Y 
      \xrightarrow{p_\pi} T^*Y
\)
are identified with
\begin{multline*}
  E(W)\times E(W)^*\times E(W)^\perp \times (E(W)^\perp)^*
\\
  \leftarrow
  E(W)\times E(W)^* \times E(W)^\perp
  \rightarrow
  E(W)\times E(W)^*,
\end{multline*}
where the first map is given by the identity times the inclusion of
$0$ in $(E(W)^\perp)^*$ and the second map is the projection.
\begin{NB2}
The morphisms
\(
  T^* (\dot{T}^*_Y X) \xleftarrow{\lsp{t}{\dot{p}}'} \dot{p}^{-1} T^*Y 
  \xrightarrow{\dot{p}_\pi} T^*Y
\)
are identified with
\begin{multline*}
  E(W)\times E(W)^*\times \dot{E}(W)^\perp \times (E(W)^\perp)^*
\\
  \leftarrow
  E(W)\times E(W)^* \times \dot{E}(W)^\perp
  \rightarrow
  E(W)\times E(W)^*,
\end{multline*}
where the first map is given by the identity times the inclusion of
$0$ in $(E(W)^\perp)^*$ and the second map is the projection.
\end{NB2}
\end{NB}

\subsection{Proof}

We first study the behavior of the singular support under the functor
$\iota^*$. Let $L\in\mathcal Q_V$. By \cite[Cor.~6.4.4]{KaSha} we have
\begin{equation*}
  SS(\iota^* L) \subset \iota^\#(SS(L)).
\end{equation*}
In our situation, we have
$\iota^\#(SS(L)) = \lsp{t}\iota'(\iota_\pi^{-1}(SS(L)))$ by \eqref{eq:is}.

\begin{NB}
We claim that $\iota$ is non-characteristic for $L$ on
$\kappa^{-1}(T^*(\HomE_T\times\HomE_W))$, where 
$\kappa^{-1}(T^*(\HomE_T\times\HomE_W))$ is considered as a subset
of $T^*E(W)$ via $\lsp{t}\kappa'$.
In fact, $C_{\dot{T}^*_{E(W)}\HomE_V}(SS(L))$ is the intersection of
$SS(L)$ with the open subset $T^* (\dot{T}^*_{E(W)} \HomE_V)$ of $T^*
({T}^*_{E(W)} \HomE_V) \cong T^*\HomE_V$. In the matrix notation, the
open set is given by the condition $B'_{WT}\neq 0$ as explained
above. On the other hand, $(B,B')\in
\kappa^{-1}(T^*(\HomE_T\times\HomE_W))$ satisfies $B'_{TW}=0$ thanks
to \eqref{eq:kappa}. Therefore 
\(
  f_\infty^\#(SS(L)) =
  \dot{p}_\pi\lsp{t}{\dot{p}}^{\prime-1}(C_{\dot{T}^*_{E(W)}\HomE_V}(SS(L)))
\)

......

It seems that this is not true.
\end{NB}
 
Next study the functor $\kappa_!$.
Note that $\kappa\colon E(W)\to \HomE_T\times\HomE_W$ is the
projection of a vector bundle. Therefore the results in
\cite[\S5.5]{KaSha} are applicable. A complex $F$ in $\scr D(E(W))$ is
{\it conic\/} if $H^j(F)$ is locally constant on the orbits of the
$\R^+$-action for all $j$. In our situation, $F = i^*L$ satisfies
this condition. Then we have 
\begin{equation*}
  SS(\kappa_!(i^*L)) \subset 
  T^* (\HomE_T\times\HomE_W) \cap SS(i^*L)
  =
  \kappa_\pi\lsp{t}\kappa^{\prime-1} SS(i^*L).
\end{equation*}
See \cite[Prop.~5.5.4]{KaSha} for the first inclusion and
\cite[(5.5.11)]{KaSha} for the second equality. Combining two
estimates, we complete the proof of \thmref{thm:est}. The estimate
\eqref{eq:0sect} has been given during the proof.

\begin{NB}
We define a function $\varphi$ on $E(W)$ defined by
\begin{equation*}
  \varphi(B) = | B_{WT} |^2,
\end{equation*}
where $B'$ is the composite of $T \hookrightarrow V \xrightarrow{B}
V\to W$, and $|\ |$ is a norm on $\Hom(T,W)$.
Then $\varphi$ is a $C^\infty$-function such that
\begin{itemize}
\item $\kappa|_{\varphi\le t}$ is proper for all $t\in\R$, and
\item $-d\varphi\notin \left(SS(F) + \lsp{t}\kappa'(
    \kappa^{-1}(T^* (\HomE_T\times\HomE_W)) \right)$.
\end{itemize}
\end{NB}

\section{Conjectures}

\subsection{Quantum unipotent subgroup and singular supports}

\begin{NB}
  See note on 2011-02-16.
\end{NB}

Let $w$ be a Weyl group element as before.
Motivated by \subsecref{subsec:Cw}, we introduce a subset $\B'(w)$ in
$\B(\infty)$ by
\begin{equation*}
  \B'(w) \defeq \{ b\in\B(\infty)\mid
  SS(L_b)\cap \Lambda_V^w\neq \emptyset\},
\end{equation*}
where we suppose $L_b\in\mathcal P_V$ in the equation
$SS(L_b)\cap \Lambda_V^w\neq \emptyset$.
Equivalently $b\notin\B'(w)$ if and only if $SS(L_b)$ is contained in
the closed subvariety $\Lambda_V\setminus\Lambda_V^w$.

\begin{NB}
  Original definition:
\begin{equation*}
  \scr K^w(\mathcal Q_V) = \{ (L) \in \scr K(\mathcal Q_V)
  \mid SS(L) \cap \Lambda_V^w = \emptyset \}.
\end{equation*}
As $SS(L\oplus L') = SS(L)\cup SS(L')$ and $SS(L[1]) = SS(L)$ (see
\cite[Chap.~V]{KS}), it is an $\A$-submodule of $\scr K(\mathcal
Q_V)$.
\end{NB}

By \eqref{eq:SSest}, the condition $\Lambda_b\cap
\Lambda_V^w\neq\emptyset$ implies $b\in\B'(w)$.
Therefore $\B(w)\subset \B'(w)$ by \subsecref{subsec:Cw}.

Let $b\notin\B'(w)$. We have
\begin{equation*}
  SS(\Res(L_b)) \cap (\Lambda_T^w\times \Lambda_W^w)
  \subset SS(L_b)\cap \Lambda_V^w = \emptyset
\end{equation*}
by \eqref{eq:0sect} and the fact that $\mathcal C_w$ is an additive
category.
Writing 
\[
   \Res(L_b) = \bigoplus \left(L_{b_1}\boxtimes L_{b_2}
  \right)[n]^{\oplus
  r^{b_1,b_2}_{b;n}},
\]
we get
\begin{equation*}
  SS(L_{b_1}\boxtimes L_{b_2})\cap (\Lambda_T^w\times \Lambda_W^w)
  = \emptyset
\end{equation*}
if $r^{b_1,b_2}_{b;n}\neq 0$ for some $n$.
This is because $SS(L\oplus L') = SS(L)\cup SS(L')$ and $SS(L[1]) =
SS(L)$ (see \cite[Chap.~V]{KS}).
In the notation in \subsecref{subsec:dual} we have
$r^{b_1,b_2}_b = \sum_n r^{b_1,b_2}_{b;n} q^n$.

We have an estimate $SS(L_{b_1}\boxtimes L_{b_2})\subset
SS(L_{b_1})\times SS(L_{b_2})$ \cite[Prop.~5.4.1]{KS}.
However this does not imply $SS(L_{b_1})\times
SS(L_{b_2})\cap(\Lambda_T^w\times \Lambda_W^w) = \emptyset$, so we
need a finer estimate.
Since $L_{b_a}$ ($a=1,2$) is a perverse sheaf, it corresponds to a
regular holonomic $D$-module under the Riemann-Hilbert correspondence
(see e.g., \cite[Th.~7.2.5]{HTT}).
Then the singular support of $L_{b_a}$ is the same as the
characteristic variety of the corresponding $D$-module
\cite[Th.~11.3.3]{KaSha}, \cite[Th.~4.4.5]{HTT}.
\begin{NB}
  More precisely $\operatorname{Ch}(M_{b_a})
  = SS(\operatorname{Sol}_X(M_{b_a})
  = SS(DR(\mathbb D_X M_{b_a})[-\dim X])
  = SS(\mathbf D_X(DR(M_{b_a})))
  = SS(DR(M_{b_a}))^a$
\end{NB}%
As the characteristic variety of the exterior product is the
product of the characteristic varieties \cite[(11.2.22)]{KaSha},
we deduce $SS(L_{b_1}\boxtimes L_{b_2}) = SS(L_{b_1})\times SS(L_{b_2})$.
Therefore we have
\begin{equation*}
  (SS(L_{b_1})\cap \Lambda^w_T) \times
  (SS(L_{b_2})\cap \Lambda^w_W) = \emptyset.
\end{equation*}
Therefore either $b_1\notin\B'(w)$ or $b_2\notin\B'(w)$.
In other words, $b_1, b_2\in\B'(w)$ and $r^{b_1,b_2}_b\neq 0$ implies
$b\in\B'(w)$. Therefore $\bigoplus_{b\in
  \B'(w)}\Q(q) b^\up$ is a subalgebra of $\Un$ by \subsecref{subsec:dual}.

Our first conjecture is the following.
\begin{Conjecture}\label{conj:1}
  $\B'(w) = \B(w)$. In other words, if $b\notin\B(w)$, then
$SS(L_b)\subset \Lambda_V\setminus\Lambda_V^w$.
\end{Conjecture}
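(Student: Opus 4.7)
The plan is to prove the remaining inclusion $\B'(w)\subset\B(w)$ (the reverse containment being already established in the discussion preceding the conjecture), by induction on $\ell(w)$, using the inductive characterization of $\B(w)$ via Kashiwara operators and Saito's reflection recalled in \subsecref{subsec:Bw}. The base case $w=1$ is immediate, since $\Lambda_V^1=\emptyset$ whenever $V\neq 0$, which forces $\B'(1)=\{u_\infty\}=\B(1)$.

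For the inductive step, fix a reduced expression $w=s_{i_1}w'$ with $\ell(w')=\ell(w)-1$. In view of \subsecref{subsec:Bw}, it would be enough to establish two properties: (i) the subset $\B'(w)\subset\B(\infty)$ is stable under the Kashiwara operators $\te_{i_1}$ and $\tf_{i_1}$; and (ii) Saito's bijection $\Lambda_{i_1}$ restricts to a map
\begin{equation*}
  \{b\in\B'(w')\mid \varepsilon_{i_1}^*(b)=0\}\to \{b\in\B'(w)\mid \varepsilon_{i_1}(b)=0\}.
\end{equation*}
Combined with the induction hypothesis $\B'(w')=\B(w')$, these would yield $\B'(w)\subset\B(w)$.

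I would attack (i) through the geometric interpretation of $\te_{i_1},\tf_{i_1}$ via Lusztig's stratification of $\HomE_V$ by the rank of the map at vertex $i_1$, tracing the singular support through the open/closed embedding and smooth projection that implement the Kashiwara operators at the level of perverse sheaves. The key input is that, since $\alpha_{i_1}\in\Delta_w^+$, the subcategory $\mathcal C_w$ is preserved by adding a simple $S_{i_1}$-summand to the top or socle of a preprojective module, so the property $SS(L_b)\cap\Lambda_V^w\neq\emptyset$ should pass through these operators; one checks this by combining the estimate \eqref{eq:SSest} with openness of $\Lambda_V^w$ inside $\Lambda_V$.

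The hard part will be (ii). Saito's reflection $\Lambda_{i_1}$ categorifies the braid operator $T_{i_1}$ on $\Un$, and there is no off-the-shelf formula relating $SS(L_b)$ to $SS(L_{\Lambda_{i_1}(b)})$. One promising route is to use the geometric reflection functors on representations of the preprojective algebra \cite{BKT}, which realize $\Lambda_{i_1}$ at the level of irreducible components of $\Lambda_V$, and to lift them to a derived equivalence of the relevant perverse sheaf categories that is compatible with singular supports. A complementary approach is to iterate \thmref{thm:est} along the reduced expression $\mathbf i$: one can attempt to unwind $\Lambda_{i_1}$ into a sequence of restriction functors $\Res$ that peel off a simple factor at each step and apply \eqref{eq:0sect} stage by stage. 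In either scenario, the central technical point, and the one I expect to be the main obstacle, is to show that the set $\B'(w)$, defined purely through the geometry of singular supports, transforms coherently under the braid group action without appealing to an independent closed-form description of $SS(L_b)$.
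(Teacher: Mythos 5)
This statement is an open conjecture in the paper, not a proven theorem: the paper establishes only the inclusion $\B(w)\subset\B'(w)$ and that $\bigoplus_{b\in\B'(w)}\Q(q)b^\up$ is a subalgebra of $\Un$, and then poses $\B'(w)=\B(w)$ as Conjecture~\ref{conj:1}. There is no proof to compare against, and your proposal does not supply one either, so the appropriate verdict is that the conjecture remains open.

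Your strategy is sensible as an outline, since it mirrors the inductive characterization of $\B(w)$ via Kashiwara operators and Saito's reflections recalled in \subsecref{subsec:Bw}. But both of your claimed steps have genuine gaps. For (i), stability of $\B'(w)$ under $\te_{i_1}$, $\tf_{i_1}$ is asserted but not established: the singular support estimate \eqref{eq:SSest} controls $SS(L_{\tf_i b})$ only via the $\varepsilon_i$-weights of the components, and it is not clear that $SS(L_{\tf_i b})\cap\Lambda_V^w\neq\emptyset$ follows from $SS(L_b)\cap\Lambda_{V'}^w\neq\emptyset$; tracing the singular support through Lusztig's rank stratification requires a nontrivial microlocal analysis that you do not carry out. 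For (ii), as you yourself acknowledge, there is no known relation between $SS(L_b)$ and $SS(L_{\Lambda_i(b)})$; Saito's reflection acts on the canonical base, the geometric reflection functors of \cite{BKT} act on irreducible components, but no derived equivalence compatible with characteristic varieties is available in the literature, and iterating the restriction estimate of \thmref{thm:est} along a reduced word only gives upper bounds that do not obviously see the lower bound needed to conclude $b\in\B'(w)$. Since you flag these points honestly as obstacles rather than resolving them, what you have is a research program, not a proof; the conjecture is still open.
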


This conjecture is also equivalent to say $\bigoplus_{b\in
  \B'(w)}\Q(q) b^\up = \Un(w)$.

\subsection{Cluster algebra and singular supports}

Recall that Geiss-Leclerc-Schr\"oer \cite{GLSq} have introduced the
structure of a quantum cluster algebra on $\Un(w)$ and conjectured
that quantum cluster monomials are contained in $\B^\up(w)$.
If this is true, we should have two formulas \eqref{eq:two_formulas}
for dual canonical base elements corresponding to $Y_R$, $Y_{T_k}$, etc.
Conversely \eqref{eq:two_formulas} implies that $Y_R$, $Y_{T_k^*}$ are
dual canonical base elements by induction on the number of mutations.

Let us speculate why these formulas hold in terms of the corresponding
perverse sheaves.

The proposal here is the following conjecture:
\begin{Conjecture}\label{conj:2}
  Let $T$ be a reachable $\mathcal C_w$-maximal rigid module and
  $R\in\operatorname{add}(T)$. Let $\Lambda_{R}$ be the closure of the
  orbit through $R$ and $b_R$ the corresponding canonical base
  element.

  If another canonical base element $b\in\B(w)$ satisfies
  \begin{equation*}
    \Lambda_{R}\subset SS(L_b),
  \end{equation*}
  we should have $b = b_R$.
\end{Conjecture}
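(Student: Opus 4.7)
The plan is to induct on the number of mutations separating $T$ from the initial reachable rigid module $V_{\mathbf{i}}$, having first reduced to the case when $R$ is an indecomposable direct summand $T_k$. For a reducible $R = T_1^{\oplus c_1}\oplus\cdots\oplus T_\ell^{\oplus c_\ell}$, the formula \eqref{eq:monomial} and its quantum analog in \eqref{eq:two_formulas} express $b_R^\up$ as an ordered product of cluster variables $b_{T_j}^\up$. Applying $\Res$ along a subspace adapted to a direct sum decomposition, and invoking \thmref{thm:est} together with \eqref{eq:0sect}, should let us split the problem into a factor-by-factor statement for each indecomposable summand; the relation $SS(L_{b_1}\boxtimes L_{b_2}) = SS(L_{b_1})\times SS(L_{b_2})$ (established in the discussion preceding Conjecture~\ref{conj:1} via Riemann--Hilbert) is the key multiplicativity ingredient.

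For the indecomposable case, the base case $T_k \in \operatorname{add}(V_{\mathbf{i}})$ places $b_{T_k}$ among the PBW-type canonical base elements $b(\bc)$ of Section~2. Their singular supports should be accessible through the iterated Saito reflections $\Lambda_{i_j}$ and the geometric incarnation of the braid group operators $T_{i_j}$ as restriction/induction functors, and one should be able to reduce to the trivial $w=1$ situation by descending induction on $\ell(w)$, using the characterization of $\B(w)$ via Kashiwara operators from \subsecref{subsec:Bw}. For the inductive step across a mutation $\mu_k$, exploit the exchange relation
\[
 b_{T_k^*}^\up \cdot b_{T_k}^\up = q^{[T_k^*,T_k]}\bigl(q^{-1}\, b_{T'}^\up + b_{T''}^\up\bigr)
\]
from \eqref{eq:two_formulas} as a defining equation for $b_{T_k^*}^\up$: translating products of dual canonical elements into the restriction functor as in \subsecref{subsec:dual}, and combining \thmref{thm:est} with the inductive hypothesis applied to $b_{T_k}$, $b_{T'}$, and $b_{T''}$, should constrain any candidate $b$ satisfying $\Lambda_{T_k^*}\subset SS(L_b)$ tightly enough to force $b = b_{T_k^*}$.

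The principal obstacle is that \thmref{thm:est} and \eqref{eq:SSest} are both \emph{upper} bounds on the singular support, whereas Conjecture~\ref{conj:2} is a uniqueness statement that demands a lower-bound-type argument: the inclusion $\Lambda_R\subset SS(L_b)$ must force $\Lambda_R$ to coincide with a specific irreducible component of $SS(L_b)$. In particular, \eqref{eq:SSest} only yields $\varepsilon_i(b) \leq \varepsilon_i(b_R)$ for all $i\in I$, which is typically far from enough to isolate $b$. Closing the gap will almost certainly require upgrading set-theoretic singular supports to characteristic cycles (via the Riemann--Hilbert correspondence already invoked in Section~3) and extracting multiplicities around the open $G_V$-orbit inside $\Lambda_R$---a feature specific to rigid modules. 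A secondary hurdle is that the above sketch implicitly relies on Conjecture~\ref{conj:1}, so in practice the two conjectures will likely have to be attacked simultaneously.
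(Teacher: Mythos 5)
This is labelled \emph{Conjecture}~\ref{conj:2} in the paper, not a theorem: the paper does not prove it, and there is no ``paper's own proof'' to compare against. What the paper supplies is supporting evidence only: (i) the statement holds in the special case where $\Lambda_R$ is the zero section $\HomE_V\subset T^*\HomE_V$ and $G_V$ has an open orbit in $\HomE_V$, because $SS(L_b)\supset\HomE_V$ then forces $L_b$ to be the constant sheaf; (ii) the conjecture would be trivially true if every $SS(L_b)$ were irreducible, which holds for type $A_4$ but already fails for $A_5$; and (iii) under the conjecture of \cite[\S1.5]{GLS1} on characteristic cycles and semicanonical bases, Conjecture~\ref{conj:2} is equivalent to Conjecture~\ref{conj:qmon} for reachable rigid modules. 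The paper then shows the \emph{converse} direction to your strategy: assuming Conjecture~\ref{conj:2}, it deduces that $b^\up_{T_1}b^\up_{T_2}$ is a multiple of $b^\up_R$ and that $b^\up_{T_k^*}b^\up_{T_k}$ is a combination of $b^\up_{T'}$, $b^\up_{T''}$, using \eqref{eq:0sect}, the product formula for $SS$ of exterior tensor products, and the Lemma at the end of Section~4.

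Beyond the mislabelling, your sketch contains a circularity you did not flag. The exchange relation you want to use as a ``defining equation,'' namely $b^\up_{T_k^*}b^\up_{T_k}=q^{[T_k^*,T_k]}(q^{-1}b^\up_{T'}+b^\up_{T''})$, is not known: what GLS prove in \eqref{eq:two_formulas} is the analogous identity for the quantum cluster variables $Y_R$, and the identification $Y_R=b^\up_R$ is precisely Conjecture~\ref{conj:qmon}, which the paper shows to be (essentially) equivalent to Conjecture~\ref{conj:2}. So the inductive step across a mutation assumes what it is trying to prove. Your diagnosis of the other obstructions is, however, on target and matches the paper's implicit posture: both \eqref{eq:SSest} and \thmref{thm:est} are one-sided bounds, so an argument that forces $b=b_R$ from $\Lambda_R\subset SS(L_b)$ needs genuine multiplicity information (characteristic cycles rather than set-theoretic $SS$), and you are right that Conjectures~\ref{conj:1} and~\ref{conj:2} would in practice have to be treated together. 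None of the machinery currently in the paper (the estimate of \thmref{thm:est}, the equality $SS(L_{b_1}\boxtimes L_{b_2})=SS(L_{b_1})\times SS(L_{b_2})$, the Saito reflections on crystals) supplies that lower bound; that is exactly why the statement remains a conjecture.
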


If Conj.~\ref{conj:1} is true, $b\in\B(\infty)$ with
$\Lambda_{b_R}\subset SS(L_b)$ is contained in $\B(w)$. Therefore the
above conjecture holds for any $b\in\B(\infty)$.

This conjecture is true for a special case when $\Lambda_{R}$ is the
zero section $\HomE_V$ of $T^*\HomE_V$ and $G_V$ has an open orbit in
$\HomE_V$.
In fact, if $SS(L_{b})\supset \HomE_V$, we have
$\operatorname{supp}(L_{b}) = \HomE_V$. Then $L_{b}$ is
$G_V$-equivariant and gives an irreducible $G_V$-equivariant local
system on the open orbit in $\HomE_V$. As the stabilizer of a point is
connected from a general property from quiver representations, it must
be the trivial rank 1 local system. Thus $L_{b}$ is the constant sheaf
on $\HomE_V$.
\begin{NB}
  I use Cor.~8.2.6 of Hotta-Takeuchi-Tanisaki: $L_{b}$ is the IC
  extension of a local system $L$ on a $G_V$-invariant open subvariety
  $U'$ of $\HomE_V$. Then $U'$ contains the open orbit $U$. Then
  Cor.~8.2.6 says $L$ is the minimal extension of $L|U$, which is
  trivial. The constant sheaf on $U'$ is also the minimal extension
  of the trivial sheaf again by Cor.~8.2.6. Therefore $L$ is trivial.
\end{NB}%
In fact, the observation that $\operatorname{supp}(L_{b}) = \HomE_V$
implies $L_{b} = \text{the constant sheaf}$ was used in a crucial way
to prove the cluster character formula in \cite{cluster}.

If $SS(L_b)$ is irreducible for all $b$, Conj.~\ref{conj:2} is
obviously true. This condition is satisfied for $\g$ of type $A_4$, but
not for $A_5$ \cite{KS}.

Let us remark a relation between the above conjecture and a conjecture
in \cite[\S1.5]{GLS1}. This is pointed out by the referee to the author.
Let us define the {\it semicanonical base\/} $\{ f_Y\}$ of $\mathbf
U(\mathfrak n)$ as the dual base of the dual semicanonical base $\{
\rho_Y\}$. In \cite[\S1.5]{GLS1} it is conjectured that the
specialization of $b$ is a linear combination $\sum m_Y f_Y$
($m_Y\in\Z$), where the summation runs over irreducible components $Y$
of $SS(L_b)$.  (More precisely it is probably given by the
characteristic cycle (see \cite[2.2.2]{HTT} for the definition) of
$L_b$.)
Dually, an irreducible component $Y = \Lambda_b$ cannot be contained in 
other $SS(L_{b'})$ ($b'\neq b$) if $\left.b^\up\right|_{q=1}=\rho_{\Lambda_b}$.
\begin{NB}
  Suppose $b' = \sum m_{Y'} f_{Y'}$. Then
  $0 = \langle b', b^\up\rangle = \langle b', \rho_{\Lambda_b}\rangle
  = m_{\Lambda_b}$.
\end{NB}%
This is nothing but our conjecture. Thus under the conjecture in
\cite[\S1.5]{GLS1}, our conjecture is equivalent to
Conj.~\ref{conj:qmon} for reachable rigid modules.

Let us explain how the first formula in \eqref{eq:two_formulas} is
related to Conj.~\ref{conj:2}. We assume $R = T_1\oplus T_2$ for
brevity.
From the assumption $\Lambda_{R}$ contains the product
$\Lambda^\circ_{T_1}\times \Lambda^\circ_{T_2}$ as an open dense
subset. Here $\Lambda^\circ_{T_i}$ denote the open orbit through
$T_i$. Its closure is $\Lambda_{T_i}$.
Suppose that $b^\up$ appears in the product $b^\up_{T_1} b^\up_{T_2}$. Then
$r_{b}^{b_{T_1},b_{T_2}}\neq 0$. We have
\begin{equation*}
  \begin{split}
  SS(L_b) \cap (\Lambda^\circ_{T_1}\times \Lambda^\circ_{T_2})
  & \supset SS(\Res(L_b))\cap (\Lambda^\circ_{T_1}\times \Lambda^\circ_{T_2})
\\
  & \supset SS(L_{b_{T_1}}\boxtimes L_{b_{T_2}})\cap
  (\Lambda^\circ_{T_1}\times \Lambda^\circ_{T_2})
\\
  & = (SS(L_{b_{T_1}})\cap \Lambda^\circ_{T_1})\times 
  (SS(L_{b_{T_2}})\cap\Lambda^\circ_{T_2}),
  \end{split}
\end{equation*}
where the first inclusion is by \eqref{eq:0sect}, the second as a
shift of $L_{b_{T_1}}\boxtimes L_{b_{T_2}}$ is a direct summand of
$\Res{L_b}$, and the third equality was observed above. The last
expression is nonempty thanks to \eqref{eq:SSest}.
Therefore we have $SS(L_b)\supset \Lambda_R$. Then
Conj.~\ref{conj:2} implies that $b = b_R$. Therefore
$b^\up_{T_1} b^\up_{T_2}$ is a multiple of $b^\up_R$.
A refinement of this argument probably proves that
$b^\up_{T_1} b^\up_{T_2}$ is equal to $b^\up_R$ up to a power of $q$.

Let us turn to the second formula in \eqref{eq:two_formulas}. The same
argument above implies that
\begin{equation*}
  SS(L_b)\cap(\Lambda^\circ_{T_k^*}\times\Lambda^\circ_{T_k})\neq\emptyset
\end{equation*}
if $r^{b_{T_k^*},b_{T_k}}_b\neq 0$. From what we have explained in
\subsecref{subsec:cluster}, there are two irreducible components
$\Lambda_{T'}$, $\Lambda_{T''}$, where $T'$ and $T''$ are
$\Lambda$-modules given by non-trivial extensions of $T_k^*$ and
$T_k$. As a non-trivial extension can degenerate to the trivial one,
both $\Lambda_{T'}$ and $\Lambda_{T''}$ contain
$\Lambda^\circ_{T_k^*}\times\Lambda^\circ_{T_k}$. It is also easy to
check that $\dim \Lambda^\circ_{T_k^*}\times\Lambda^\circ_{T_k} = \dim
\Lambda_V - 1$, where $V$ is the underlying vector space of $T_k\oplus
T_k^*$.

\begin{Lemma}
  If an irreducible component $Y$ of $\Lambda_V$ contains
  $\Lambda^\circ_{T_k^*}\times\Lambda^\circ_{T_k}$, we have either $Y
  = \Lambda_{T'}$ or $=\Lambda_{T''}$.
\end{Lemma}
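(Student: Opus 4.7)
The plan is to reduce the counting of irreducible components of $\Lambda_V$ through $O_{M_0}$ (where $M_0 := T_k^*\oplus T_k$, noting that $\Lambda^\circ_{T_k^*}\times\Lambda^\circ_{T_k}$ embeds as an open dense piece in this orbit) to an explicit computation on the preprojective algebra of a very small quiver, by means of an étale-local slice theorem. First, I would observe that $T_k$ and $T_k^*$ are rigid, so $\Ext^1_\Lambda(T_k,T_k)=\Ext^1_\Lambda(T_k^*,T_k^*)=0$, while $\dim\Ext^1_\Lambda(T_k,T_k^*)=\dim\Ext^1_\Lambda(T_k^*,T_k)=1$ by \subsecref{subsec:cluster}; hence $\dim\Ext^1_\Lambda(M_0,M_0)=2$. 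Combined with the dimension formula for orbits in $\Lambda_V$, this recovers $\dim O_{M_0}=\dim\Lambda_V-1$, so any component $Y\supset O_{M_0}$ has $O_{M_0}$ as a divisor; both $\Lambda_{T'}$ and $\Lambda_{T''}$ are such components (the non-trivial extensions degenerate to $M_0$), so the task is to prove there are no others.

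Next, I would invoke an étale-local slice theorem: transversally to $O_{M_0}$, the germ of $\Lambda_V$ at $M_0$ is étale-isomorphic to the germ at $0$ of the nilpotent preprojective variety $\Lambda^\epsilon_{\mathbf 1}$ of the \emph{Ext quiver} $Q^\epsilon$ of $M_0$ at the dimension vector $\mathbf 1=(1,1)$. The vertices of $Q^\epsilon$ are the two indecomposable summands $T_k$, $T_k^*$, and by $2$-Calabi--Yau duality together with the Ext computation above, the underlying graph is a single edge (type $A_2$). Slice statements of this kind are standard in the literature on moment-map quiver varieties (Crawley-Boevey) and underlie the geometric arguments of \cite{GLSc}.

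Then I would compute $\Lambda^\epsilon_{\mathbf 1}$ directly: choosing an orientation of the $A_2$-edge, a representation is a pair of scalars $(a,b)\in\mathbb A^2$, the moment-map equation at either vertex collapses to $ab=0$, and nilpotency is automatic in dimension $(1,1)$. Hence
\begin{equation*}
\Lambda^\epsilon_{\mathbf 1}=\{a=0\}\cup\{b=0\}\subset\mathbb A^2
\end{equation*}
has exactly two irreducible components. Pulled back through the étale slice of Step~2, this shows that at most two irreducible components of $\Lambda_V$ pass through $O_{M_0}$; since $\Lambda_{T'}$ and $\Lambda_{T''}$ are two distinct such components (the two non-trivial extensions belong to opposite sides of the mutation and are not isomorphic), they exhaust the list and $Y\in\{\Lambda_{T'},\Lambda_{T''}\}$.

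The main obstacle is the precise justification of the étale slice theorem at the non-semisimple point $M_0$. Conceptually it comes from Hamiltonian reduction in an equivariant Darboux neighbourhood of $M_0$ in $T^*\HomE_V$: the normal slice to $G_V\cdot M_0$ is modelled on $\Ext^1(M_0,M_0)$, and the transverse moment map is the cup product $\Ext^1(M_0,M_0)^{\otimes 2}\to\Ext^2(M_0,M_0)$, which under $2$-Calabi--Yau duality becomes precisely the bilinear form $(a,b)\mapsto ab$ cutting out the local model. Carrying this out rigorously (or quoting the appropriate result from Crawley-Boevey's work on moment maps for quivers and from \cite{GLSc}) is the technical core of the argument; granted it, the remaining steps are a direct computation.
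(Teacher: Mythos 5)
Your approach is genuinely different from the paper's. The paper proves the lemma by a direct degeneration argument: it takes a sequence $Z_n$ of generic points of $Y$ converging to $M_0 = T_k^*\oplus T_k$, uses upper semicontinuity of $\dim\Hom$ together with the formula $\dim\Ext^1_\Lambda = 2\dim\Hom_\Lambda - (\dim V,\dim V)$ to pin down $\dim\Hom(T_k,Z_n)$, $\dim\Hom(T_k^*,Z_n)$, $\dim\Hom(Z_n,T_k)$, $\dim\Hom(Z_n,T_k^*)$ up to a single unit drop, and then in each of the two resulting cases constructs an explicit short exact sequence exhibiting $Z_n$ as $T'$ or $T''$. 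Everything is elementary module theory and requires no local-structure machinery; the price is a somewhat lengthy case analysis. Your route, by contrast, reduces the count of components through the orbit of $M_0$ to the $A_2$ computation $\{ab=0\}\subset\mathbb A^2$ via a transverse slice modelled on the Ext quiver. That is conceptually attractive and, if the slice theorem holds, shorter.

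The gap, which you yourself flag, is real and is the crux: the étale slice theorems that are genuinely ``standard'' for quiver moment maps (Crawley-Boevey, and the Luna-type statements they rest on) are formulated at closed $G_V$-orbits in the moment-map fibre, i.e.\ at semisimple (polystable) representations. The point $M_0 = T_k^*\oplus T_k$ is rigid but typically far from semisimple, so its orbit is not closed, and you cannot quote those results off the shelf. What is available at a non-semisimple point is the deformation-theoretic picture (Voigt's lemma plus the obstruction map $\Ext^1(M_0,M_0)\to\Ext^2(M_0,M_0)$, with the quadratic part identified via $2$-Calabi--Yau duality as the Ext-quiver moment map); but turning that into the precise statement that the germ of $\Lambda_V$ at $M_0$ is smooth-locally a product of the orbit with $\{ab + (\text{higher order})=0\}$, and that higher-order terms do not create or merge branches, is exactly the work you have deferred. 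Until that is supplied, your argument is a plausible sketch rather than a proof. A minor additional inaccuracy: $\Lambda^\circ_{T_k^*}\times\Lambda^\circ_{T_k}$ is not open dense in the $G_V$-orbit $O_{M_0}$ (it is the smaller $G_T\times G_W$-orbit); what you actually need, and what does hold, is that an irreducible component of $\Lambda_V$ is $G_V$-invariant, so containing $\Lambda^\circ_{T_k^*}\times\Lambda^\circ_{T_k}$ forces containment of $O_{M_0}$.
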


\begin{proof}
  Take a sequence $Z_n$ of points of $Y$ converging to the module
  $T_k^*\oplus T_k$, regarded as a point of $Y$. We may assume
  $Z_n\not\cong T_k^*\oplus T_k$.
  \begin{NB}
  Then we have
  \begin{equation*}
    \begin{split}
      & \dim \Ext^1(T/T_k, Z_n) \le \dim \Ext^1(T/T_k, T_k^*\oplus T_k) = 0
\\
      & \dim \Ext^1(Z_n, T/T_k) \le \dim \Ext^1(T_k^*\oplus T_k,T/T_k) = 0
    \end{split}
  \end{equation*}
  for sufficiently large $n$ by the upper semicontinuity of the
  dimension of cohomology groups. 
  \end{NB}%

  Then we have 
  $\dim \Hom(Z_n, T_k^*\oplus T_k) \le \dim \Hom(T_k^*\oplus T_k,
  T_k^*\oplus T_k)$ for sufficiently large $n$ by the upper
  semicontinuity of the dimension of cohomology groups. If the
  equality holds, we can take $\xi_n\in \Hom(Z_n, T_k^*\oplus T_k)$
  converging to the identity of $\Hom(T_k^*\oplus T_k, T_k^*\oplus
  T_k)$ for $n\to\infty$. In particular, $\xi_n$ is invertible, hence
  $Z_n\cong T_k^*\oplus T_k$. This contradicts with our
  assumption. Therefore we have the strict inequality $\dim \Hom(Z_n,
  T_k^*\oplus T_k) < \dim \Hom(T_k^*\oplus T_k, T_k^*\oplus T_k)$. The
  same argument gives $\dim \Hom(T_k^*\oplus T_k,Z_n) < \dim
  \Hom(T_k^*\oplus T_k, T_k^*\oplus T_k)$.

  Therefore
  \begin{equation*}
    \begin{split}
    & \dim \Ext^1(T_k^*\oplus T_k,Z_n)
\\
    =\; & - (\dim V,\dim V) + \dim \Hom(T_k^*\oplus T_k, Z_n)
    + \dim \Hom(Z_n,T_k^*\oplus T_k)
\\
    \le\; &
    - (\dim V,\dim V) + 2 \dim \Hom(T_k^*\oplus T_k, T_k^*\oplus T_k) - 2
\\
   =\; &
    \Ext^1(T_k^*\oplus T_k, T_k^*\oplus T_k) - 2 = 0,
    \end{split}
  \end{equation*}
  where we have used the formula in \cite[Lem.~2.1]{GLSc}.  The upper
  semicontinuity also shows $\Ext^1(T/T_k, Z_n) = 0$, hence
  $Z_n\in\operatorname{add}(T/T_k)$.

  As the inequality above must be an equality, we get
  \begin{equation*}
    \begin{split}
    & \dim \Hom(T_k^*\oplus T_k,Z_n) = \dim \Hom(Z_n,T_k^*\oplus T_k) \\
    = \; & \dim \Hom(T_k^*\oplus T_k,T_k^*\oplus T_k) - 1.
    \end{split}
  \end{equation*}
Therefore
\begin{equation*}
  \begin{split}
  & \dim \Hom(T_k^*, Z_n) + \dim \Hom(T_k,Z_n)\\
  =\; &
  \dim \Hom(T_k^*, T_k^*\oplus T_k) + \dim \Hom(T_k,T_k^*\oplus T_k) - 1.
  \end{split}
\end{equation*}
Note that $\dim \Hom(T_k^*, Z_n) \le \dim \Hom(T_k^*, T_k^*\oplus
T_k)$ and $\dim \Hom(T_k, Z_n) \le \dim \Hom(T_k, T_k^*\oplus T_k)$
by the semicontinuity.
Hence the above implies that one of inequalities must be an equality.
Suppose that the first one is an equality. Then we have 
\begin{equation*}
  \begin{gathered}
    \dim \Hom(T_k^*, Z_n) = \dim \Hom(T_k^*, T_k^*\oplus T_k),
\\
    \dim \Hom(T_k, Z_n) = \dim \Hom(T_k,T_k^*\oplus T_k) - 1.
  \end{gathered}
\end{equation*}
The same argument shows that $\dim \Hom(Z_n, T_k^*) = \dim
\Hom(T_k^*\oplus T_k, T_k^*)$ or $\dim \Hom(Z_n, T_k) = \dim
\Hom(T_k^*\oplus T_k, T_k)$.
The first equality is impossible, as 
$0 = \dim \Ext^1(T_k^*,Z_n) \neq \dim \Ext^1(T_k^*,T_k\oplus T_k) = 1$ and
the above dimension formula. Therefore we have
\begin{equation*}
  \begin{gathered}
    \dim \Hom(Z_n, T_k^*) = \dim \Hom(T_k^*\oplus T_k,T_k^*) - 1,
\\
    \dim \Hom(Z_n, T_k) = \dim \Hom(T_k^*\oplus T_k,T_k).
  \end{gathered}
\end{equation*}

We take $\eta_n\in\Hom(T_k^*,Z_n)$ converging to $\id_{T_k^*}\oplus 0$
in $\Hom(T_k^*,T_k^*\oplus T_k)$. In particular, $\eta_n$ is injective
for sufficiently large $n$. We consider an exact sequence
\begin{equation*}
  0 \to \Hom(Z_n/\Ima\eta_n, T_k) \to \Hom(Z_n, T_k) \to \Hom(\Ima\eta_n, T_k).
\end{equation*}
The next term $\Ext^1(Z_n/\Ima\eta_n, T_k)$ vanishes, as we have $\dim
\Ext^1(Z_n/\Ima\eta_n,T_k) \le \dim\Ext^1(T_k,T_k) = 0$ by the upper
semicontinuity. Therefore we have
\begin{equation*}
  \dim \Hom(Z_n/\Ima\eta_n, T_k) = \dim \Hom(T_k, T_k).
\end{equation*}
We take $\zeta_n\in\Hom(Z_n/\Ima\eta_n,T_k)$ converging to
$\id_{T_k}$. Then $\zeta_n$ is an isomorphism for sufficiently large
$n$. Composing the projection $p\colon Z_n\to Z_n/\Ima\eta_n$ with
$\zeta_n$, we have an exact sequence
\begin{equation*}
  0\to T_k^* \xrightarrow{\eta_n} Z_n \xrightarrow{\zeta_n\circ p}
  T_k \to 0.
\end{equation*}
This shows that $Z_n\cong T''$.

When $\dim \Hom(T_k,Z_n) = \dim \Hom(T_k,T_k^*\oplus T_k)$, the same
argument shows that $Z_n\cong T'$.
\end{proof}

Now Conj.~\ref{conj:2} implies that $b_{T_k^*}^\up b_{T_k}^\up$ is a
linear combination of $b_{T'}^\up$ and $b_{T''}^\up$. A refinement of
the argument hopefully gives the second formula in \eqref{eq:two_formulas}.

\bibliographystyle{myamsplain}
\bibliography{mybib,nakajima,qGLS}
\end{document}